\renewcommand{\epsilon}{\varepsilon}
\renewcommand{\setminus}{\smallsetminus}
\renewcommand{\emptyset}{\varnothing}
\newtheorem{proposition}[subsection]{Proposition}
\newtheorem{corollary}[subsection]{Corollary}
\newtheorem{lemma}[subsection]{Lemma}
\newtheorem{example}[subsection]{Example}
\newtheorem*{theorema}{Theorem A}
\newtheorem*{theoremb}{Theorem B}
\newtheorem*{theoremb1}{Theorem B1}
\newtheorem*{theoremb2}{Theorem B2}
\newtheorem*{conjectureb}{Conjecture B}
\theoremstyle{definition}
\newtheorem{definition}[subsection]{Definition}
\newtheorem{notation}[subsection]{Notation}
\theoremstyle{remark}
\newtheorem{remark}[subsection]{Remark}
\newcommand{\N}{\mathbb N}
\newcommand{\R}{\mathbb R}
\newcommand{\RP}{\mathbb R\mathbb P}
\newcommand{\FP}{\operatorname{FP}}
\newcommand{\hull}{\operatorname{hull}}
\title[Tame subsets of spheres]
{ On subsets of $S^n$ whose $(n+1)$-point subsets are contained in open hemisheres }
\author{Robert Bieri}
\author{Peter Kropholler}
\author{Brendan Owens}
\address{\newline Institut f\"ur Mathematik,
Goethe-Universit\"at,
Robert-Mayer-Str. 6-10 60325 Frankfurt, Germany
\newline Department of Mathematical Sciences, Binghamton University, SUNY, Binghamton, NY 13902-6000 USA}
\email{rbieri@math.binghamton.edu}
\address{\newline Department of Mathematics,
310 Malott Hall,
Cornell University,
Ithaca, NY 14853-4201 USA
\newline School of Mathematics and Statistics, University of Glasgow, 15 University Gardens, Glasgow G12 8QW, United Kingdom}
\email{peter.kropholler@glasgow.ac.uk}
\address{\newline School of Mathematics and Statistics, University of Glasgow, 15 University Gardens, Glasgow G12 8QW, United Kingdom}
\email{brendan.owens@glasgow.ac.uk}
\thanks{
The second author was supported in part by the Mittag-Leffler Institute, Djursholm, and also wishes to thank the Goethe-Universit\"at Frankfurt for hospitality during the preparation of this manuscript. 
The third author was supported in part by EPSRC grant EP/I033754/1.}
\date{\today} % Activate to display a given date or no date
\begin{document}

\begin{abstract}
We investigate the nature of subsets of spheres which satisfy a tameness condition associated with the Bieri--Groves $\FP_{m}$-conjecture. We find that there is a natural polyhedrality in the case of $n$-tame subsets of an $(n-1)$-sphere. In the case $n=3$ we establish a strong polyhedrality condition for certain maximal open $3$-tame sets. Many examples are included.
\end{abstract}

\subjclass[2010]{Primary 52B99; Secondary 20J06, 20F16}

\maketitle

\tableofcontents

\section{Statement of results; notion of maxtame set}

Let $S^{n-1}$ denote the unit sphere in Euclidean $n$-space $\R^n$.
For a natural number $m$ we use the terminology \emph{$m$-point subset} to mean a non-empty subset of \emph{at most} $m$ points. We shall say that a finite subset of $\R^{n}$ is \emph{balanced} if and only if a choice of point masses can be assigned to its elements so that its centre of mass is at the origin, or equivalently if its convex hull contains the origin.
A subset $V$ of $S^{n-1}$ is said to be \emph{$m$-tame} if and only if every $m$-point subset of $V$ is contained in some open hemisphere of $S^{n-1}$, or equivalently if $V$ has no balanced $m$-point subsets.
The notion of $m$-tameness arose in the context of studying cohomological finiteness conditions for metabelian groups, see especially joint work of the first author with Groves:\ \cite{bierigroves1982,bierigroves1984,bierigroves1985,bierigroves1986}. In particular it was shown that the $\Sigma^{c}$ invariants associated with metabelian groups are closed, rationally defined and polyhedral. In this article we draw attention to a manifestation of polyhedrality in the abstract setting of tame subsets of spheres. Rather than considering closed sets, it is convenient to consider open sets. 

It is well known that every closed $(n+1)$-tame subset of $S^{n-1}$ is contained in an open hemisphere. The same is true of open $(n+1)$-tame subsets. These two facts are covered by Lemma \ref{completelytamecase}. As a consequence, the $m$-tameness condition is most interesting to study when $2\le m\le n$. In this article we prove two theorems on polyhedrality when $m=n$ and illustrate them with examples and counterexamples in the case $n=3$ (and $m=2$ or $3$). We use the following terminology.

\begin{definition}
A subset $U$ of $S^{n-1}$ is called \emph{weakly maxtame} if $U$ is open and $n$-tame, $\pi_{0}(U)$ is finite, and $U$ is maximal amongst open $n$-tame sets with at most $|\pi_{0}(U)|$ components. A weakly maxtame set is called \emph{maxtame} if it is maximal amongst all open $n$-tame subsets of $S^{n-1}$.
\end{definition}

Our first theorem shows that polyhedrality is an intrinsic property of weakly maxtame sets.

\begin{theorema}
Let $U$ be a weakly maxtame subset of $S^{n-1}$. Then there is a finite family $\mathcal H$ of open hemispheres of $S^{n-1}$ such that
\begin{enumerate}
\item
each component of $U$ is an intersection of members of $\mathcal H$; and
\item
the closure of each $H\in \mathcal H$ has a neighbourhood $N$ such that $U\cap H = U\cap N$.
\end{enumerate} 
In particular, distinct components of $U$ have disjoint closures.
\end{theorema}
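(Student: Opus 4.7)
The plan is to analyse the boundary of each component of $U$ and extract $\mathcal{H}$ from the finitely many supporting hemispheres forced by the weak maxtame hypothesis. First I would note that the closure $\overline{U}$ is itself $n$-tame: a balanced subset of $\overline{U}$ with strictly positive weights perturbs to one in $U$, and a balanced subset with a vanishing weight reduces to a smaller balanced subset already forbidden. The first substantive step is to show that each component $C$ of $U$ is contained in some open hemisphere. If not, Carath\'{e}odory places $0$ in the interior of $\mathrm{conv}(\overline{C})$, and one would combine Lemma~\ref{completelytamecase} with maximality to derive a contradiction by inflating $\overline{U}$ into a strictly larger open $n$-tame set without increasing the number of components.

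Fix a component $C$. The key boundary analysis is as follows. For every $p\in\partial C$ and every small $\epsilon>0$, the set $U\cup B(p,\epsilon)$ has the same number of components as $U$, so weak maxtameness forces it to fail $n$-tameness, and the offending balanced $n$-subset must meet $B(p,\epsilon)\smallsetminus U$. Taking $\epsilon\to 0$ and using compactness of $\overline{U}$, one extracts witnesses $q_1,\ldots,q_{n-1}\in\overline{U}$ with $\{p,q_1,\ldots,q_{n-1}\}$ balanced. The $n$-tameness of $\overline{U}$ rules out degenerate weightings, so all $n$ points share an $(n-1)$-dimensional linear span; the unique open hemisphere $H_p$ bounded by the corresponding great $(n-2)$-sphere and lying on the side where $C$ sits locally near $p$ is the candidate supporting hemisphere at $p$.

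The technical heart of the proof is to show that as $p$ varies over $\partial C$ only finitely many distinct hemispheres $H_p$ occur, and that each actually contains $C$ globally. I would package this via the polar set
\[
K(C) := \{v\in S^{n-1} : \langle v,x\rangle\ge 0 \text{ for all } x\in\overline{C}\},
\]
a closed, geodesically convex subset of $S^{n-1}$ with nonempty interior (by Step~1) whose extreme points correspond exactly to the tight supporting hemispheres of $\overline{C}$. The main obstacle is to prove that $K(C)$ has only finitely many extreme points: each extreme $v$ is pinned down by a configuration of points of $\overline{U}$ lying on $\partial H_v$, and one must combine $n$-tameness, compactness, and the finiteness of $|\pi_0(U)|$ to show that only finitely many combinatorial types of such witness configurations are possible.

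Taking $\mathcal{H}$ to be the union over all components of these finite families, property~(i) would then follow from bipolar duality applied to each $\overline{C}$ and its polar $K(C)$, identifying $C$ with the intersection of the corresponding open hemispheres. For property~(ii), each $H\in\mathcal{H}$ supports only the particular component $C$ for which it was constructed, and $n$-tameness together with compactness ensures positive distance between $\overline{H}$ and the other components, yielding a tubular neighbourhood $N$ of $\overline{H}$ with $U\cap H = U\cap N$. The disjointness of component closures then drops out of the same analysis: a shared boundary point between distinct components would, via the witnesses produced above, either violate $n$-tameness or permit a merger reducing $|\pi_0(U)|$, contradicting weak maxtame in either case.
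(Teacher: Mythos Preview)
Your opening claim that $\overline{U}$ is $n$-tame is false, and this undermines the rest of the plan. The perturbation argument breaks exactly when the balanced $n$-subset spans a proper linear subspace: nudging the points into $U$ moves them off that subspace and the origin escapes the convex hull. The open hemisphere in $S^{1}$ (with $n=2$) already shows this: it is maxtame, yet its closure contains the antipodal pair $\{1,-1\}$. Worse, your own boundary analysis then manufactures a balanced $n$-subset of $\overline{U}$ at every $p\in\partial C$, so either that step is vacuous or your first claim was wrong; in fact the boundary witnesses are real and $\overline{U}$ is genuinely not $n$-tame in the interesting cases.

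Even if one repairs this, the ``technical heart'' --- finiteness of extreme points of the polar $K(C)$ --- is left as an obstacle with only a vague appeal to compactness and combinatorics. There is no evident reason why the witness configurations at different boundary points should fall into finitely many types; a priori each $p\in\partial C$ could produce its own great sphere. The paper's proof avoids this difficulty entirely by working from the \emph{inside} rather than the boundary: one picks a single representative $x_i$ in each of the finitely many components and, for each $n$-element subset $E$ of these representatives, invokes the Key Lemma~\ref{keylemma} to obtain an open hemisphere $K(E)\supseteq E$ with $U\cap\partial K(E)=\emptyset$. These finitely many great spheres tile $S^{n-1}$; connectivity forces each component of $U$ into a single tile, tameness is witnessed by the $K(E)$, and weak maximality then forces $U$ to equal the union of the tiles it meets. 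Polyhedrality thus comes for free from the finiteness of the representative set, with no polar-dual or supporting-hyperplane analysis needed. Part~(ii) and the disjointness of closures are then handled by a separate argument using addable points and the $(n-1)$-hull, not by a distance estimate of the kind you sketch.
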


We have the following conjecture which motivates the study of maxtame and weakly maxtame sets.

\begin{conjectureb}
Let $U$ be an $n$-tame subset of $S^{n-1}$ which is either
\begin{enumerate}
\item
closed; or
\item
open with finitely many connected components.
\end{enumerate}
Then $U$ is contained in a maxtame subset.
\end{conjectureb}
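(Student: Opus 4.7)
The plan is to combine a thickening argument (for the closed case) with Zorn's lemma, and to exploit Theorem~A to control the number of components of the resulting maximal set.

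First I would reduce case~(i) to case~(ii). If $U$ is closed and $n$-tame then $U$ is a compact subset of $S^{n-1}$, so the function $d(v_{1},\ldots,v_{n}) := \operatorname{dist}(0,\hull(v_{1},\ldots,v_{n}))$ is continuous and strictly positive on $U^{n}$, attaining a positive minimum $\delta>0$. Since $d$ is Lipschitz in each coordinate, the open $\epsilon$-neighbourhood $U_{\epsilon}\subseteq S^{n-1}$ is again $n$-tame for all sufficiently small $\epsilon>0$. Compactness of $U$ also forces $U_{\epsilon}$ to have only finitely many connected components: otherwise one could extract from $U$ an infinite sequence of points pairwise at distance at least $2\epsilon$. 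Replacing $U$ by such a $U_{\epsilon}$ reduces the closed case to the open case.

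For case~(ii), I would apply Zorn's lemma to the poset of open $n$-tame subsets of $S^{n-1}$ containing $U$, ordered by inclusion. Because $n$-tameness is a finite-point condition, the union of any chain is again open and $n$-tame, so a maximal element $V\supseteq U$ exists. It remains to show that $V$ has finitely many connected components; granted this, $V$ is weakly maxtame by definition and maxtame by construction.

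This last step is the principal obstacle. The natural strategy is to establish a component bound: if $U$ is open and $n$-tame with $k$ components, every open $n$-tame superset of $U$ has at most $M=M(U,n)$ components. Granted such a bound, one applies Zorn's lemma level by level---at level $j$ restricting to open $n$-tame supersets with at most $j$ components (a chain of these still has at most $j$ components, by sampling one point per component in the union)---and the process terminates after at most $M-k$ steps. Theorem~A would be the key input, since each intermediate supremum is weakly maxtame and carries a polyhedral description by a finite family $\mathcal H_{j}$ of open hemispheres; I would try to show that the cardinalities $|\mathcal H_{j}|$ are uniformly bounded and that any $n$-tame enlargement of a weakly maxtame set must respect a refinement of its polyhedral data. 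Proving such combinatorial rigidity---or equivalently ruling out a pathological maximal $V$ whose boundary carries infinitely many balanced $n$-tuples in $\overline{V}$---looks like the substantive content of the conjecture and where the bulk of the work would lie.
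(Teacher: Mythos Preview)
Your reduction from the closed case to the open case is fine and essentially what the paper does (Lemmas~\ref{sec2:lemma2} and~\ref{sec2:lemma3}). Your level-by-level Zorn's lemma argument is also exactly the paper's mechanism for producing \emph{weakly} maxtame supersets (Theorem~B1).

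The gap is in the last step, and it is a real one. The component bound $M=M(U,n)$ you hope for does not exist. Take $U$ to be a small open spherical triangle in $S^{2}$: it sits inside a component of the $k$th member of Family~B for every $k$, and these are $3$-tame open sets with $2k+2$ components. So open $n$-tame supersets of a fixed $U$ can have arbitrarily many components, and your iteration need not terminate. The appeal to Theorem~A does not rescue this: Theorem~A bounds the complexity of a \emph{given} weakly maxtame set in terms of its own component count, but says nothing uniform as that count is allowed to grow.

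In fact the paper does not prove Conjecture~B for general $n$; it remains open for $n\ge 4$. What the paper does prove is Theorem~B2, the case $n\le 3$, and the method is entirely different from a component-count argument. Starting from a weakly maxtame $U$ (obtained via Theorem~B1), one chooses a balanced $4$-point set $F\subset U$, so that the antipodal $1$-skeleton $-F[2]$ cuts $S^{2}$ into four triangular faces $\Phi_{1},\dots,\Phi_{4}$ containing all addable points. One then visits the faces in turn and, within each face, greedily adjoins the convex components of the addable locus. The verification that each such adjunction preserves $3$-tameness uses a case analysis on how many points of a putative balanced triple lie in the newly added region, and the case of exactly two such points is where the argument genuinely exploits $n=3$; the paper flags explicitly that this step does not generalise. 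So the ``substantive content'' you anticipate is handled, in low dimension, by a constructive filling-in procedure rather than by any a~priori component bound.
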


The conjecture is elementary if $n\le2$.
We establish the conjecture for $n=3$ (i.e. for $3$-tame sets on a $2$-sphere) and we also prove a weak version of the conjecture for arbitrary $n$.

\begin{theoremb}
Let $U$ be as in Conjecture B. Then
\begin{itemize}
\item
[\textbf{B1}] $U$ is contained in a weakly maxtame subset; and
\item
[\textbf{B2}] if $n\le 3$, $U$ is contained in a maxtame subset.
\end{itemize}
\end{theoremb}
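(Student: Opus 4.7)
The plan is to prove B1 via Zorn's lemma applied to the poset of open $n$-tame sets with a bounded number of components, and then to derive B2 by iterating B1 and invoking a uniform bound on the component count specific to low dimensions.

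For B1, I first reduce to the case that $U$ is open with finitely many components. If $U$ is closed, hence compact in $S^{n-1}$, then the set of balanced $n$-tuples is closed in $(S^{n-1})^n$ and disjoint from the compact $U^n$; a standard compactness argument (tube lemma) yields an open neighborhood $N\supseteq U$ in $S^{n-1}$ that is $n$-tame, and covering $U$ by finitely many open balls inside $N$ produces an open $n$-tame superset of $U$ with finitely many components. Now assume $U$ is open with $k:=|\pi_{0}(U)|$ finite, and let $\mathcal{F}$ be the poset of open $n$-tame supersets of $U$ having at most $k$ components, ordered by inclusion. For an ascending chain in $\mathcal{F}$, the union is open; is $n$-tame, because any $n$-point subset lies in a single chain member; and has at most $k$ components, since $k+1$ points chosen from distinct components of the union would all lie in some single chain member by totally-orderedness, contradicting that member's component bound. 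Zorn's lemma thus provides a maximal element of $\mathcal{F}$, which is weakly maxtame and contains $U$.

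For B2, I iterate the construction. Apply B1 to obtain a weakly maxtame $V_{0}\supseteq U$. If $V_{0}$ is not maxtame, pick any open $n$-tame $W\supsetneq V_{0}$; by the maximality clause in weak-maxtameness, $|\pi_{0}(W)|>|\pi_{0}(V_{0})|$. Apply B1 to $W$ to obtain a weakly maxtame $V_{1}\supseteq W$ with strictly more components than $V_{0}$. Iterating produces a strictly increasing sequence of natural numbers $|\pi_{0}(V_{0})|<|\pi_{0}(V_{1})|<\cdots$.

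The main obstacle, and the reason the argument only works for $n\le 3$, is to exhibit a uniform upper bound on $|\pi_{0}(V)|$ over all open $n$-tame $V\subseteq S^{n-1}$. For $n\le 2$ this is elementary. For $n=3$ the bound must be extracted from a dimension-specific combinatorial analysis of $3$-tame configurations on $S^{2}$, exploiting Theorem A's polyhedral description of weakly maxtame sets (each $V_{i}$ has disjoint polyhedral components cut out by finitely many open hemispheres) together with low-dimensional Helly-style reasoning about which triples of components can simultaneously lie in common open hemispheres. Once the bound is in hand, the sequence $|\pi_{0}(V_{i})|$ terminates, and the final term is the desired maxtame superset of $U$.
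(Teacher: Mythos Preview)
Your proof of B1 is essentially the paper's argument: reduce the closed case to the open case by a compactness/tube-lemma step, then apply Zorn's lemma to the poset of open $n$-tame supersets with a fixed bound on the number of components.

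Your approach to B2, however, has a fatal gap. The central claim --- that for $n\le 3$ there is a uniform upper bound on $|\pi_0(V)|$ as $V$ ranges over open $n$-tame (or weakly maxtame) subsets of $S^{n-1}$ --- is false. The paper itself constructs, in Families A and B of Section~4, maxtame subsets of $S^2$ with arbitrarily many components (the $k$th member of Family~B has $2k+2$ triangular components). The same phenomenon already occurs on $S^1$: the maxtame subsets described in Section~5 have any odd number of components. So no Helly-style counting argument of the kind you sketch can succeed, and your iteration $V_0\subsetneq V_1\subsetneq\cdots$ need not terminate.

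The paper's proof of B2 takes a completely different route. Starting from a weakly maxtame $U\subset S^2$ that is not $4$-tame, one fixes a balanced $4$-point subset $F\subset U$; the antipodal $1$-skeleton $-F[2]$ cuts $S^2$ into four open triangular faces $\Phi_1,\dots,\Phi_4$, and all addable points lie in these faces. One then enlarges $U$ face by face: within each $\Phi_j$ one enumerates the components of the (polyhedral) set of addable points and adjoins them one at a time, using a convexity property of the addable set (Corollary~\ref{convexity}) together with an argument \emph{specific to $n=3$} to show that each adjunction preserves $3$-tameness. The process terminates because there are only four faces, not because of any global bound on $|\pi_0|$.
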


We go on to exhibit examples of maxtame sets on $S^{2}$. We shall also see that maximal open $2$-tame subsets of $S^{2}$ need not be polyhedral even if they have a finite number of components: in fact there are non-polyhedral examples having a single component or any other number of components. 

When $n=2$ a complete classification and description of (weakly) maxtame sets is easy to obtain and we also go on to describe their configuration space.

\section{Embedding closed $n$-tame sets into polyhedral $n$-tame sets}

In this section we shall prove the first part of Theorem B.

\begin{theoremb1}
Let $U$ be as in the conjecture.
Then $U$ is contained in a weakly maxtame subset.
\end{theoremb1}

Let $\mathcal T(\ell,m,n)$ denote the set of open $m$-tame subsets of $S^{n-1}$ having at most $\ell$ components, partially ordered by inclusion. Using Zorn's lemma it is immediate that $\mathcal T(\ell, m, n)$ has maximal elements. If $U$ is any open $m$-tame set then $U$ belongs to $\mathcal T(|\pi_{0}(U)|,m,n)$ and if $|\pi_{0}(U)|$ is finite then the maximal elements here are weakly maxtame sets containing $U$.  This proves  Theorem B1 in case (ii).

If $U$ is closed and $m$-tame we shall show that $U$ has an open $m$-tame neighbourhood $U'$. By compactness, the union $U''$ of some finite number of components of $U'$ covers $U$ and we may choose a weakly maxtame set containing $U''$ as above.

Therefore, to complete the proof of Theorem B1 it suffices to show that every closed $m$-tame subset is contained in an open $m$-tame subset. The following lemma is used in the proof. The conclusion we reach, Lemma \ref{sec2:lemma3}(iii) below, is stronger than needed here but is used in its stronger form to prove the Key Lemma \ref{keylemma} below which plays a part in the proof of Theorem A.

\begin{lemma}\label{sec2:lemma2}
Let $A_{1},\dots,A_{m}$ be closed subspaces of a compact metrizable space $X$. Suppose that $B$ is a closed subspace of $\underbrace{X\times\dots\times X}_{m}$ disjoint from $A_{1}\times\dots\times A_{m}$. Then there exist open sets $U_{i}\supseteq A_{i}$ and $V\supseteq B$ such that $V$ is disjoint from $U_{1}\times\dots\times U_{m}$. Moreoever if all the $A_{i}$ are equal then we may choose all $U_{i}$ equal.
\end{lemma}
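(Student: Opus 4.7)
The plan is to use compactness to produce a uniform positive separation and then take metric $\delta/2$-neighbourhoods. Since $X$ is metrizable, fix a compatible metric $d$ on $X$ and equip $X^m$ with the sup metric $d_\infty(x,y) := \max_{i} d(x_i, y_i)$, which is compatible with the product topology. Compactness of $X$ passes to $X^m$; each $A_i$ is compact, hence so is the product $A := A_1\times\cdots\times A_m$, and $B$ is compact since it is closed in $X^m$. The hypothesis $A\cap B = \emptyset$ then gives $\delta := d_\infty(A,B) > 0$ (the case where $A$ or $B$ is empty is trivial).

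Set $U_i := \{x\in X : d(x, A_i) < \delta/2\}$ and $V := \{y\in X^m : d_\infty(y, B) < \delta/2\}$; these are open, with $A_i\subseteq U_i$ and $B\subseteq V$. To check that $V$ is disjoint from $U_1\times\cdots\times U_m$, suppose for contradiction that some $x = (x_1,\ldots,x_m)$ lies in the intersection. For each $i$, choose $a_i\in A_i$ with $d(x_i, a_i) < \delta/2$; then $a := (a_1,\ldots,a_m)\in A$ and $d_\infty(x,a) < \delta/2$. Since $x\in V$, also pick $b\in B$ with $d_\infty(x, b) < \delta/2$. The triangle inequality yields $d_\infty(a,b) < \delta$, contradicting the definition of $\delta$.

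For the \emph{moreover} clause, when $A_1 = \cdots = A_m = A$, simply set $U := \{x\in X : d(x, A) < \delta/2\}$ and take every $U_i$ equal to $U$; the estimate above applies verbatim.

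There is no genuine obstacle here: the result is a routine compactness/separation argument. The only point worth flagging is the choice of the sup metric on $X^m$, which is what ensures that the $\delta/2$-ball in $d_\infty$ around a point of $A$ lies inside the product neighbourhood $U_1\times\cdots\times U_m$ of the $A_i$; with, say, a Euclidean product metric one would have to rescale by a factor depending on $m$. Everything else is bookkeeping.
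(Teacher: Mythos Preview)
Your proof is correct and follows essentially the same approach as the paper: both use metric $\epsilon$-neighbourhoods of the $A_i$ together with compactness to produce the required product neighbourhood. Your argument is in fact a bit more direct, invoking the positive distance between disjoint compacta and the triangle inequality, whereas the paper first separates by normality and then runs a sequential contradiction to show that any open neighbourhood of $A_1\times\cdots\times A_m$ contains a product of metric neighbourhoods; the underlying idea is the same.
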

\begin{proof}
Since $X$ is compact and Hausdorff we can separate $A_{1}\times\dots\times A_{m}$ and $B$ by a pair of open sets $U$ and $V$, so it suffices to show that any neighbourhood $U$ of $A$ contains an open neighbourhood of the form $U_{1}\times\dots\times U_{m}$ where $A_{i}\subseteq U_{i}\subseteq X$. Suppose for a contradiction that this is not the case and let $U$ be an offending open neighbourhood of $A$. Let $d$ be a metric on $X$ matching its topology. For each natural number $n$ let $W_{i,n}:=\{(x\in X;\ d(x,A_{i})<\frac1n\}$, and then choose an $m$-tuple $v_{n}\in W_{1,n}\times\dots\times W_{m,n}\setminus U$ witnessing the assumption that $U$ is a bad set. By compactness the sequence $(v_{n})$ has a convergent subsequence whose limit belongs to $A_{1}\times \dots\times A_{m}\setminus U$, a contradiction. 

Therefore, for sufficiently large $n$, we have $W_{1,n}\times\dots\times W_{m,n}\subseteq U$. If the $A_{i}$ are all equal then for each $n$, the $W_{i,n}$ are all equal.
\end{proof}

\begin{lemma}\label{sec2:lemma3}
Let $B$ be the set of ordered $m$-tuples $(x_{1},\dots,x_{m})$ of $\underbrace{S^{n-1}\times\dots\times S^{n-1}}_{m}$ such that the set 
$\{x_{1},\dots,x_{m}\}$ is balanced. Then the following hold.
\begin{enumerate}
\item
$B$ is closed.
\item
A subset $A$ of $S^{n-1}$ is $m$-tame if and only if $\underbrace{A\times\dots\times A}_{m}$ is disjoint from $B$.
\item
If $A$ is a closed $m$-tame subset of $S^{n-1}$ then $A$ has an open neighbourhood whose closure is also $m$-tame.
\end{enumerate}
\end{lemma}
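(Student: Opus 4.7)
The plan is to tackle the three clauses in order, with clause (iii) doing the real work.

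For (i), I would prove $B$ is closed by a direct compactness argument. Suppose a sequence $(x_1^{(k)},\dots,x_m^{(k)})\in B$ converges to $(x_1,\dots,x_m)$. By definition of $B$, for each $k$ there is a probability vector $\lambda^{(k)}=(\lambda_1^{(k)},\dots,\lambda_m^{(k)})$ in the standard $(m-1)$-simplex satisfying $\sum_i \lambda_i^{(k)} x_i^{(k)}=0$. The simplex is compact, so pass to a convergent subsequence $\lambda^{(k)}\to\lambda$ and take limits in the linear relation to obtain $\sum_i \lambda_i x_i = 0$ with $\lambda$ still a probability vector. Hence $(x_1,\dots,x_m)\in B$.

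For (ii), I would argue directly from the definitions. If $A$ is $m$-tame then any $(x_1,\dots,x_m)\in A^m$ gives a subset $\{x_1,\dots,x_m\}$ of $A$ of cardinality at most $m$, hence contained in an open hemisphere, hence with convex hull missing the origin, so the tuple is not in $B$. Conversely, every nonempty subset of $A$ of cardinality at most $m$ can be listed (with repetitions if necessary) as an $m$-tuple in $A^m$; if the subset were balanced the tuple would lie in $B$, so $A^m\cap B=\emptyset$ forces $A$ to be $m$-tame.

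For (iii), the crux, I would apply Lemma \ref{sec2:lemma2} with $X=S^{n-1}$, all the $A_i$ equal to the given closed $m$-tame set $A$, and $B$ as in the statement. Part (i) makes $B$ closed and part (ii) makes $A\times\cdots\times A$ disjoint from $B$, so the hypotheses are met. The lemma produces an open set $U\supseteq A$ and an open set $V\supseteq B$ such that $V$ is disjoint from $U\times\cdots\times U$. The key point, which is exactly what one gets out of the stronger separation form of Lemma \ref{sec2:lemma2}, is that the \emph{complement} of $V$ is a closed set containing $U^m$, so
\[
\overline{U}^m \;=\; \overline{U^m} \;\subseteq\; (S^{n-1})^m\setminus V \;\subseteq\; (S^{n-1})^m\setminus B.
\]
By (ii) applied to $\overline{U}$ in place of $A$, the closed set $\overline{U}$ is $m$-tame, so $U$ is the desired open neighbourhood of $A$ with $m$-tame closure.

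The main obstacle I anticipate is precisely the last step: getting the \emph{closure} (not just an open neighbourhood) of $A$ to be $m$-tame. A naive open-set separation would yield only that $U$ itself is $m$-tame, which is insufficient for the use the authors make in the Key Lemma. Extracting the open $V$ around $B$ from Lemma \ref{sec2:lemma2}, so that $U^m$ lies in a closed set disjoint from $B$, is exactly the device that upgrades the conclusion from $U$ to $\overline{U}$.
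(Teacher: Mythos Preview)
Your proposal is correct and follows essentially the same route as the paper: parts (i) and (ii) are declared ``clear'' there, and for (iii) the paper likewise applies Lemma~\ref{sec2:lemma2} with all $A_i=A$ to obtain $U\supseteq A$ and an open $V\supseteq B$ with $U^m\cap V=\emptyset$, then uses $\overline{U^m}=\overline{U}^m\subseteq (S^{n-1})^m\setminus V\subseteq (S^{n-1})^m\setminus B$ together with (ii). Your explicit argument for (i) via compactness of the probability simplex is a perfectly good way to unpack what the paper leaves implicit.
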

\begin{proof}
(i) and (ii) are clear. To establish (iii), use Lemma \ref{sec2:lemma2} together with (i) to separate $A\times\dots\times A$ and $B$ by open sets, with the open neighbourhood of $A\times\dots\times A$ being of the form $U\times\dots\times U$ for some $U\supseteq A$. Then the closure $\overline{U\times\dots\times U}=\overline U\times\dots\times\overline U$ is again disjoint from $B$ and $U$ has the desired properties by (ii).
\end{proof}

We note that if the set $U$ in Theorem B1 has a finite number $m$ of components then it follows from the proof that it is contained in a weakly maxtame set with at most $m$ components.

As promised in the introduction, we include a proof of the basic and doubtless well known  
\begin{lemma}\label{completelytamecase}
If $U$ is either open or closed in $S^{n-1}$ and is $(n+1)$-tame then
$U$ is contained in an open hemisphere.
\end{lemma}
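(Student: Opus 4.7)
The plan is to use Carath\'eodory's theorem to convert the tameness hypothesis into a statement about the convex hull of $U$, and then apply a suitable separation theorem. Carath\'eodory tells us that every point of $\hull(U)\subseteq\R^n$ is a convex combination of at most $n+1$ points of $U$, so $0\in\hull(U)$ would exhibit a balanced subset of $U$ of size at most $n+1$ and contradict $(n+1)$-tameness. Hence, under either hypothesis of the lemma, $0\notin\hull(U)$, and everything reduces to producing an open hemisphere containing $U$ from this one piece of information.

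In the closed case, $U$ is compact and $\hull(U)$ is compact as well (being the image of $U^{n+1}$ times the standard $n$-simplex of coefficients under the continuous map $(\mathbf x,\mathbf\lambda)\mapsto\sum\lambda_i x_i$). The disjoint compact convex sets $\hull(U)$ and $\{0\}$ can be strictly separated by a hyperplane; translating this hyperplane to pass through the origin bounds an open hemisphere containing $U$.

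In the open case, $\hull(U)$ is a convex subset of $\R^n$ disjoint from $\{0\}$, so the finite-dimensional Hahn--Banach theorem produces some $v\in\R^n\setminus\{0\}$ with $\langle v,x\rangle\ge 0$ for every $x\in\hull(U)\supseteq U$, showing that $U$ lies in the closed hemisphere $\{x\in S^{n-1}:\langle v,x\rangle\ge 0\}$. The main obstacle is to promote this weak inequality to a strict one. Suppose for contradiction that some $x_0\in U$ satisfies $\langle v,x_0\rangle=0$. Then $v\perp x_0$, so $v/\|v\|$ is tangent to $S^{n-1}$ at $x_0$; the curve $t\mapsto\cos(t)\,x_0-\sin(t)\,v/\|v\|$ lies on $S^{n-1}$ and has $\langle v,\cdot\rangle$-value $-\sin(t)\|v\|<0$ for small $t>0$, so every neighbourhood of $x_0$ in $S^{n-1}$ meets $\{\langle v,\cdot\rangle<0\}$. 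Since $U$ is open and contains $x_0$, this contradicts $U\subseteq\{\langle v,\cdot\rangle\ge 0\}$. Hence $\langle v,x\rangle>0$ for every $x\in U$, and $U$ lies in the open hemisphere determined by $v$.
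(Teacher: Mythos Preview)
Your proof is correct and follows essentially the same line as the paper's: Carath\'eodory's theorem shows that $0\notin\hull(U)$, and then a separation argument yields the hemisphere. The only cosmetic difference is in the open case, where the paper observes that $\hull(U)$ is itself open (so containment in a closed half-space forces containment in its interior), whereas you work directly on the sphere and use a geodesic through $x_0$ to rule out boundary points; both arguments are straightforward.
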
 
\begin{proof}
A classical result of Carath\'eodory states that the convex hull of a subspace of Euclidean space $\R^{n}$ is the union of the convex hulls of the $(n+1)$-point subsets, see (\cite{Caratheodory}, Satz 9, end of the third paragraph).

Let $U$ be $(n+1)$-tame in $S^{n-1}$. Let $V$ be the convex hull of $U$ in the ambient Euclidean space $\R^{n}$. The tameness condition ensures that none of the convex hulls of $(n+1)$-point subsets of $U$ contain the origin. Therefore, by Carath\'eodory's theorem, $V$ is a convex set in $\R^{n+1}$ not containing the origin. The closure $\overline V$ of $V$ either does not contain the origin or contains the origin on its boundary. It follows that $\overline V$ is contained in a closed half-space: this can be deduced from (\cite{Rockafellar1970},Theorem 11.5) for example.  If $U$ is open then $V$ is open and is contained in the interior open half-space.
If $U$ is closed then the origin does not belong to $V$ and one can again deduce that $V$ is contained in an open half-space.
\end{proof}

\section{Polyhedrality of weakly maxtame sets}

The next result is our tool for establishing polyhedrality.
For $v\in S^{n-1}$, we use the notation $H(v)$, $\overline H(v)$ to denote the open  and closed hemispheres in $S^{n-1}$ centred at $v$.
%This is the result that Robert and I proved one Sunday in Binghamton

\begin{lemma}[The Key Lemma]\label{keylemma}
Let $X\subseteq U$ be subsets of $S^{n-1}$ such that $U$ is either open or closed and $n$-tame, and $X$ is closed and $(n+1)$-tame. Then there is an open hemisphere $H$ containing $X$ such that $U\cap \partial H=\emptyset$.
\end{lemma}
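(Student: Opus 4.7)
The plan is to combine Lemma \ref{completelytamecase} with Lemma \ref{sec2:lemma3}(iii) and then employ a perturbation argument on the space of admissible hemisphere centres. Since $X$ is closed and $(n+1)$-tame, Lemma \ref{sec2:lemma3}(iii) produces an open neighbourhood $M \supseteq X$ whose closure $\overline M$ is still $(n+1)$-tame, and Lemma \ref{completelytamecase} then furnishes an open hemisphere $H(v_0)$ containing $\overline M$. Set $W = \{v \in S^{n-1} : \overline M \subseteq H(v)\}$, an open neighbourhood of $v_0$. For every $v \in W$ we have $X \subseteq H(v)$ and $\overline M \cap \partial H(v) = \emptyset$, so $U \cap \partial H(v) \subseteq U \setminus \overline M$. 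The task thus reduces to locating $v \in W$ with $E_v := U \cap \partial H(v) = \emptyset$.

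Given $v \in W$ with $E_v \neq \emptyset$, the set $E_v$ is $n$-tame in $S^{n-1}$ and hence $((n-1)+1)$-tame as a subset of $\partial H(v) \cong S^{n-2}$; it is open or closed in $\partial H(v)$ according as $U$ is. Applying Lemma \ref{completelytamecase} in dimension $n-1$ places $E_v$ in an open hemisphere of $\partial H(v)$, with centre $w_v \perp v$ satisfying $\langle w_v, u\rangle > 0$ for every $u \in E_v$. This motivates perturbing $v$ toward $w_v$: for $v_t = (\cos t)v + (\sin t)w_v$ and small $t > 0$, one computes $\langle v_t, u\rangle = (\sin t)\langle w_v, u\rangle > 0$ for each $u \in E_v$, so every point of $E_v$ is pushed strictly into $H(v_t)$, while $v_t$ remains in $W$ by continuity.

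The main obstacle is that this perturbation could introduce \emph{new} boundary intersections: a point $u \in U$ with $\langle v, u\rangle$ slightly negative and $\langle w_v, u\rangle$ positive could satisfy $\langle v_t, u\rangle = 0$ at some small $t > 0$. To address this I would apply Lemma \ref{sec2:lemma3}(iii) a second time, now within $\partial H(v)$, thickening $E_v$ to an open neighbourhood whose closure is still $n$-tame in $\partial H(v)$ and therefore lies in an open hemisphere of $\partial H(v)$; this yields a uniform positive lower bound for $\langle w_v,\cdot\rangle$ on a neighbourhood of $E_v$ in $S^{n-1}$. Combined with compactness of $U$ in the closed case, or with a preliminary approximation of $U$ by a closed subset containing $X$ in the open case, this should allow one to select $t > 0$ at which $E_{v_t}$ is strictly smaller than $E_v$. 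An iteration, or an extremality argument such as maximising the function $v \mapsto \inf_{u \in U}|\langle v, u\rangle|$ over (the closure of) $W$, should then deliver a $v$ with $E_v = \emptyset$. Making this last step rigorous—in particular, ensuring the new intersections really can be avoided for arbitrary closed or open $n$-tame $U$—is where I expect the bulk of the technical work to lie.
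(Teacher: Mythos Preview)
Your rotation idea is exactly the geometric heart of the argument: the boundary trace $E_v=U\cap\partial H(v)$ is $n$-tame in the $(n-2)$-sphere $\partial H(v)$, hence lies in an open hemisphere there, and tilting $H(v)$ toward that hemisphere absorbs $E_v$ into the interior. The gap is precisely where you locate it, and your proposed remedies do not close it. Thickening $E_v$ inside $\partial H(v)$ controls points near $E_v$, but the troublesome points are those of $U$ lying just \emph{below} $\partial H(v)$ which sweep onto the new boundary as you rotate; these need not be close to $E_v$ in $\partial H(v)$. The extremality suggestion fails for the same reason: with $v^\ast=(0,0,1)$, $w=(1,0,0)$, and $U$ containing the arc $\{(\cos\theta,0,-\sin\theta):0\le\theta\le\epsilon\}$, one has $E_{v^\ast}=\{(1,0,0)\}$ yet $E_{v_t}=\{(\cos t,0,-\sin t)\}$ for every $t\in(0,\epsilon]$, so $\inf_{u\in U}|\langle v_t,u\rangle|$ stays at zero and the iteration makes no progress.

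The paper resolves this not by optimising over hemisphere centres but by applying Zorn's Lemma to \emph{subsets of $U$}: take $Z$ maximal among subsets of $U$ that contain $X$ and lie in some closed hemisphere $H$. Maximality forces $Z=U\cap H$, so $Z$ is closed; now perform your rotation to push $Z$ into the interior of $R(H)$. The point is that any $u\in U\cap\partial R(H)$ would satisfy $Z\cup\{u\}\subseteq\overline{R(H)}$, contradicting maximality of $Z$; so no new boundary intersections can arise. For open $U$ the paper then exhausts by closed sets $V_\ell\nearrow U$ (via Urysohn), applies the closed case to each, and intersects the resulting (nested, closed, nonempty) sets of admissible centres---close in spirit to your suggested reduction.
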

\begin{proof}
Suppose first that $U$ is closed. Consider the set $\mathcal X$ of subsets of $U$ which contain $X$ and which are contained in a closed hemisphere. We use Zorn's Lemma to show that $\mathcal X$ contains maximal elements. 
Lemma \ref{completelytamecase} shows that $X$ is contained in a closed hemisphere and so $X\in\mathcal X$ and $\mathcal X$ is non-empty.
Suppose now that $\mathcal Y$ is a totally ordered subset of $\mathcal X$. For any $Y\subseteq S^{n-1}$ let $N_{Y}$ denote the set of $v\in S^{n-1}$ such that $Y\subseteq\overline H(v)$. Then $N_{Y}$ is closed and for $Y\subseteq Y'$ we have $N_{Y}\supseteq N_{Y'}$. As $Y$ runs through $\mathcal Y$, the $N_{Y}$ are a family of non-empty closed sets with the finite intersection property. Therefore there exists $u\in\bigcap_{Y\in\mathcal Y}N_{Y}$. Now $\bigcup_{Y\in\mathcal Y}Y\subseteq\overline H(u)$ and hence totally ordered sets in $\mathcal X$ are bounded above.

Let $Z$ be a maximal element of $\mathcal X$ and let $H$ be a closed hemisphere which contains $Z$. We have $Z\subseteq H\cap U\in\mathcal X$ and so the maximality of $Z$ guarantees that $Z=H\cap U$ and $Z$ is closed. Consider the intersection $W:=Z\cap\partial H$. Then $W$ is an $n$-tame subset of the $(n-2)$-sphere $\partial H$ and hence it is contained in an open hemisphere $K\subset\partial H$. Now we can define a small rotation $R$ fixing $\partial K$ so that $Z$ lies in the interior of the hemisphere $R(H)$. Maximality of $Z$ now ensures that $U\cap\partial R(H)=\emptyset$.

Suppose now that $U$ is open.
Using Urysohn's Lemma, we may choose a continuous function $f:S^{n-1}\to[0,1]$ such that 
\begin{itemize}
\item
$f$ vanishes on $S^{n-1}\setminus U$;
\item
$f$ takes value $1$ on $X$; and
\item
$f$ takes values in the open interval $(0,1)$ on $U\setminus X$.
\end{itemize}
For each natural number $\ell$ let $U_{\ell}$ denote the open set $f^{-1}\left((\frac1{\ell+1},1]\right)$ and let $V_{\ell}$ denote the closed set $f^{-1}\left([\frac1{\ell+1},1]\right)$. Then we have 
\[ X\subset U_{1}\subset V_{1}\subset U_{2}\subset V_{2}\subset\dots \]
and $U=\bigcup_{\ell=1}^{\infty}V_{\ell}$. By the closed case already proved we know that 
\[ D_{\ell}=\{v\in S^{n-1};\ X\subseteq H(v)\textrm{ and } V_{\ell}\cap \partial H(v)=\emptyset\} \]
is non-empty for each $l$. Therefore
\[ E_{\ell}=\{v\in S^{n-1};\ X\subseteq H(v)\textrm{ and } U_{\ell}\cap \partial H(v)=\emptyset\} \]
is non-empty for each $l$. The sets $E_{\ell}$ are closed and nested ($E_{\ell}\supseteq E_{\ell+1}$) so by compactness there is an $x$ in their intersection. For this $x$ we have $X\subseteq H(x)$ and $U_{\ell}\cap \partial H(x)=\emptyset$ for all $\ell$. Since $\bigcup U_{\ell}=U$ the result follows.
\end{proof}

\begin{definition}
Let $U,V$ be closed polyhedral subsets of $S^{n-1}$. We shall say that $U$ \emph{is adjacent to} $V$ if and only if $U\cap V$ has a codimension one component.
\end{definition}

\begin{definition}
Let $U$ be an $n$-tame subset of $S^{n-1}$. We shall say that a point $x\in S^{n-1}$ is \emph{addable (with respect to $U$)} if and only if $U\cup\{x\}$ is $n$-tame. We write $U^{+}$ for the set of all addable points.
\end{definition}

We shall need to work with convex sets in spherical geometry. A little care needs to be taken when assigning meaning to the spherical convex hull of a set whose Euclidean convex hull contains the origin (i.e. the centre of the sphere). We shall adopt the convention that the spherical convex hull of $U\subseteq S^{n-1}$ is the projection from the origin to the sphere of the Euclidean convex hull in case the Euclidean convex hull does not contain the origin and is $S^{n-1}$ itself otherwise.

\begin{notation}
Let $U$ be a subset of $S^{n-1}$. We write $U[m]$ for the union of the spherical convex hulls of the $m$-point subsets of $S^{n-1}$. This is called the \emph{$m$-hull of $U$}.
\end{notation}

\begin{lemma}\label{whatisaddable}
Let $U$ be an $n$-tame subset of $S^{n-1}$. Then $U^{+}=S^{n-1}\setminus -U[n-1]$.
\end{lemma}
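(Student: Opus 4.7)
The plan is to translate directly between the balance condition defining $n$-tameness and the conic/convex-hull condition defining $U[n-1]$. First I would note that, since $U$ is already $n$-tame, the only $n$-point subsets of $U\cup\{x\}$ that could witness failure of $n$-tameness are those containing $x$; so $x\in U^{+}$ if and only if no subset of the form $\{x,y_{1},\dots,y_{k}\}$ with $y_{1},\dots,y_{k}\in U$ and $k\le n-1$ is balanced.

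Next I would analyse when such a set is balanced. Balance means there exist $\lambda_{0},\lambda_{1},\dots,\lambda_{k}\ge 0$, not all zero, with $\lambda_{0}x+\sum_{i=1}^{k}\lambda_{i}y_{i}=0$. Because $U$ is $n$-tame and $k\le n-1$, the subset $\{y_{1},\dots,y_{k}\}$ is not balanced, which forces $\lambda_{0}>0$. Rearranging gives $-x=\sum_{i=1}^{k}(\lambda_{i}/\lambda_{0})y_{i}$, so $-x$ is a nonnegative linear combination of the $y_{i}$; conversely any such expression produces a balanced relation after normalising. Moreover, because $\{y_{1},\dots,y_{k}\}$ is not balanced, its Euclidean convex hull misses the origin, so its spherical convex hull is unambiguously the radial projection of the Euclidean one, and a unit vector $-x$ is a nonnegative combination of the $y_{i}$ precisely when $-x$ lies in this spherical convex hull.

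Putting the pieces together, $x\notin U^{+}$ if and only if $-x$ lies in the spherical convex hull of some subset of $U$ of size at most $n-1$, i.e.\ $-x\in U[n-1]$; taking complements yields the claimed equality $U^{+}=S^{n-1}\setminus -U[n-1]$. The argument is really just a careful unpacking of definitions, so there is no deep obstacle; the only point needing attention is the convention for the spherical convex hull, where one must verify that every subset of $U$ of size at most $n$ is not balanced (hence its Euclidean convex hull avoids the origin), so that the projective definition applies rather than the exceptional case where the hull is declared to be all of $S^{n-1}$.
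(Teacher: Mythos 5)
Your proof is correct and follows essentially the same route as the paper's, which simply observes that $x$ lies in a balanced $n$-point subset of $U\cup\{x\}$ exactly when $-x$ lies in the spherical hull of the remaining at most $n-1$ points of $U$. Your additional care over the positivity of $\lambda_{0}$ and the well-definedness of the spherical hull fills in details the paper leaves as ``equally straightforward.''
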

\begin{proof}
If a point $x\in S^{n-1}$ belongs to a balanced $n$-point subset $E\subseteq U\cup\{x\}$ then $-x$ belongs to the spherical hull of $E\setminus\{x\}$ and this is contained in $\subseteq U[n-1]$. Thus $x\in -U[n-1]$. The converse is equally straightforward.
\end{proof}

\begin{lemma}\label{nudge}
Let $U$ be an open $n$-tame subset of $S^{n-1}$ and let $H$ be an open hemisphere. Then $U\cup(H\cap\overline U)$ is $n$-tame.
\end{lemma}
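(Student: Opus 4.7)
The plan is to argue by contradiction. Suppose $T\subseteq U\cup(H\cap\overline U)$ is a balanced $n$-point subset; the goal is to produce a balanced subset of size at most $n$ inside $U$, contradicting the $n$-tameness of $U$.

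The first step is a standard cone reduction. The set of nonnegative coefficient vectors $(\mu_x)_{x\in T}$ satisfying $\sum_{x\in T}\mu_x x = 0$ is a convex cone closed under addition, so some balanced combination has maximal support $I\subseteq T$. Discard the points outside $I$; after relabelling, the remaining set is $T = \{x_1,\dots,x_k\}$ with $k\leq n$ and $\sum_{i=1}^k \lambda_i x_i = 0$ for some strictly positive weights $\lambda_i$. Taking the inner product of this relation with the centre $w$ of $H$ yields $\sum_i \lambda_i\langle w,x_i\rangle = 0$, which is incompatible with $T\subseteq H$; since $H\cap\overline U\subseteq H$, at least one $x_i$ must lie in $U$, so we may relabel so that $x_1\in U$.

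For the perturbation step, use that each $x_i$ with $i\geq 2$ lies in $\overline U$ to choose $x_i^{(m)}\in U$ with $x_i^{(m)}\to x_i$, and define
\[
y^{(m)} \;:=\; -\,\frac{\sum_{i=2}^{k}\lambda_i x_i^{(m)}}{\bigl\|\sum_{i=2}^{k}\lambda_i x_i^{(m)}\bigr\|}.
\]
Since $\sum_{i=2}^k \lambda_i x_i = -\lambda_1 x_1$ has norm $\lambda_1 > 0$, the denominator is positive for large $m$ and $y^{(m)}\to x_1$. Openness of $U$ then forces $y^{(m)}\in U$ for all sufficiently large $m$, and by construction $\bigl\|\sum_{i=2}^k \lambda_i x_i^{(m)}\bigr\|\, y^{(m)} + \sum_{i=2}^k \lambda_i x_i^{(m)} = 0$. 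Hence $\{y^{(m)},x_2^{(m)},\dots,x_k^{(m)}\}$ is a balanced subset of $U$ with at most $k\leq n$ points, contradicting the $n$-tameness of $U$.

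The only non-routine ingredient is the opening cone reduction: one must observe that balanced combinations form a convex cone in order to extract a single combination with all coefficients strictly positive, and then note that the trimmed $T$ still meets $U$ because it is balanced and therefore cannot lie entirely inside the open hemisphere $H$. Once this reduction is in place, the remainder is a one-sided nudging argument in which $x_2,\dots,x_k$ are moved into $U$ and $x_1$ is reconstructed from them via the balance relation.
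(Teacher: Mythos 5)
Your proof is correct and follows essentially the same route as the paper's: observe that the points outside $U$ all lie in the open hemisphere $H$ and hence cannot exhaust a balanced set, then nudge those boundary points into $U$ and let a point of $U$ compensate via the balance relation. The paper states this only as an informal ``nudging'' sketch; your support-maximization step (to guarantee the compensating point carries strictly positive weight) and the explicit reconstruction of $y^{(m)}$ supply exactly the details that sketch omits, so this is a faithful, rigorous rendering of the intended argument rather than a different one.
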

\begin{proof}
Suppose not. Let $F$ be a balanced $n$-point subset of 
$U\cup(H\cap\overline U)$. Set $E:=F\setminus U$. Since $E$ is contained in the open hemisphere $H$ it follows that at least one point $u$ of $F$ belongs to $U$. Each of the points of $E$ belongs to the closure of $U$. By nudging the points of $E$ back into $U$ and allowing the point $u$ to compensate by adjusting its position in the open set $U$ we then obtain a balanced $n$-point subset of $U$, a contradiction.
\end{proof}

It follows in particular that every point in the closure of $U$ is addable.

\begin{lemma}\label{peter}
Let $U$ be an $n$-tame subset of $S^{n-1}$ and let $C$ be a non-empty path-connected subset of $U$. Then $U\cup C[2]$ is $n$-tame.
\end{lemma}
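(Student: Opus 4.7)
The plan is to derive a contradiction with the $n$-tameness of $U$ by producing, from a hypothetical balanced subset of $U\cup C[2]$ of size at most $n$, a balanced subset of $U$ of size at most $n$. The tool is the classical Fenchel--Bunt strengthening of Carath\'eodory's theorem: if $X\subseteq\R^n$ has at most $n$ connected components, then every point of $\hull(X)$ is a convex combination of at most $n$ points of $X$.

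Suppose for contradiction that $F\subseteq U\cup C[2]$ is balanced with $|F|\le n$, so that $\sum_{p\in F}\alpha_p\,p=0$ for some strictly positive weights $\alpha_p$. For each $p\in F\setminus U$, since $p\in C[2]$ there exist $a_p,b_p\in C$ such that $p$ lies on the spherical geodesic from $a_p$ to $b_p$; here $a_p$ and $b_p$ are not antipodal because $U$ (and hence $C$) is in particular $2$-tame, so we may write $p=\lambda_p a_p+\mu_p b_p$ with $\lambda_p,\mu_p\ge 0$. Path-connectedness of $C$ then allows us to choose a path $\gamma_p\colon[0,1]\to C$ with $\gamma_p(0)=a_p$ and $\gamma_p(1)=b_p$.

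Set
\[
T:=(F\cap U)\cup\bigcup_{p\in F\setminus U}\gamma_p([0,1])\subseteq U.
\]
Substituting the expressions $p=\lambda_p a_p+\mu_p b_p$ into the balance relation writes $0$ as a non-trivial non-negative combination of points of $T$, so $0\in\hull(T)$. Moreover $T$ has at most $|F\cap U|+|F\setminus U|=|F|\le n$ connected components: one per point of $F\cap U$, and at most one per path image $\gamma_p([0,1])$. Hence Fenchel--Bunt produces a subset $G\subseteq T\subseteq U$ with $|G|\le n$ and $0\in\hull(G)$. This $G$ is a balanced subset of $U$ of size at most $n$, contradicting $n$-tameness.

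The main obstacle, and the reason one needs Fenchel--Bunt rather than just Carath\'eodory, is the bookkeeping on the component count. Replacing each non-$U$ point $p$ of $F$ by the pair $\{a_p,b_p\}$ produces a balanced subset of $U$ of total cardinality up to $2|F\setminus U|+|F\cap U|$, and the raw Carath\'eodory bound only guarantees a balanced sub-collection of size $n+1$, one too many. Path-connectedness of $C$ is precisely what collapses each split pair into a single connected path-component, so that the refined bound of $n$ from Fenchel--Bunt applies.
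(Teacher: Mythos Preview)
Your proof is correct and takes a genuinely different route from the paper's. The paper argues by hand: it picks a balanced $n$-point set $F$ in $U\cup C[2]$ with the fewest points outside $U$, replaces one such point $w$ by its two geodesic endpoints $u,v\in C$ to get an $(n+1)$-point set $E$ whose simplex contains the origin in its interior, and then slides $v$ back to $u$ along a path in $C$; an intermediate-value argument produces a time at which some facet of the moving simplex contains the origin, yielding a balanced $n$-point set with strictly fewer points outside $U$ and hence a contradiction. Your argument instead replaces every point of $F\setminus U$ by a path in $C$ joining its two endpoints, observes that the resulting set $T\subseteq U$ has at most $|F|\le n$ connected components and contains the origin in its Euclidean hull, and then invokes the Fenchel--Bunt sharpening of Carath\'eodory to extract a balanced $n$-point subset of $T$ in one stroke. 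The paper's proof is self-contained but is, in effect, re-proving the relevant instance of Fenchel--Bunt inline; your version is shorter and more conceptual at the cost of importing that theorem. One small cosmetic point: a point $p$ on the short great arc from $a_p$ to $b_p$ is a \emph{positive scalar multiple} of a convex combination of $a_p$ and $b_p$, not literally a convex combination; this is harmless for the argument (you only need $0$ to lie in the positive cone, hence in $\hull(T)$), but worth stating precisely.
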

\begin{proof}
For each finite set $F$, write $\mu F$ for the number of points not in $U$. For a contradiction, suppose that there is a balanced $n$-point subset of $U\cup C[2]$. Let $F$ be a choice of such a set with $\mu F$ as small as possible. Clearly $F$ contains at least one point $w$ not in $U$. Then $w$ belongs to $C[2]$ and we may choose two points $u,v$ in $C$ such that $w$ lies on the short geodesic joining $u$ to $v$.  We note by tameness of $U$, $u$ and $v$ are not antipodal.
Let $E:=(F\setminus\{w\})\cup\{u,v\}$. We have $\mu E=\mu F-1$ and therefore none of the $n$-point subsets of $E$ are balanced. It follows that $E$ spans a simplex in Euclidean space which contains the origin in its interior. Let $p:[0,1]\to C$ be a path from $u$ to $v$. Consider the simplices $\Sigma(t)$ spanned by $E(t):=(F\setminus\{w\})\cup\{u,p(t)\}$. We have $E(1)=E$ while $E(0)$ is a face of $E$.  It follows that for some $t$, $E(t)$ has a face which contains the origin. At this time, the vertices of this face form a balanced set. However, $\mu E(t)$ remains always strictly less that $\mu F$ because $p(t)$ is contained in $u$. This is a contradiction.
\end{proof}

Applying the lemma iteratively we can replace $C[2]$ by its spherical hull, although we will not directly use this fact.

\begin{proof}[Proof of Theorem A]
Let $U$ be a weakly maxtame subset of $S^{n-1}$. 
Let $F$ be a finite set of representatives of the path components of $U$.
We begin by finding a finite set $\mathcal K$ of open hemispheres such that
\begin{itemize}
\item
every $n$-point subset of $U$ is contained in some $K\in\mathcal K$; and
\item $U\cap\partial K=\emptyset$ for all $K\in\mathcal K$.
\end{itemize}
This is achieved by choosing one hemisphere $K=K(E)$ for each $n$-point subset $E$ of $F$ using Key Lemma \ref{keylemma}. Then $\mathcal K$ has at most $\binom{|F|}{n}$ hemispheres. Given an arbitrary $n$-point subset $G$ of $U$, let $E\subseteq F$ be the corresponding representatives. Connectivity ensures that $K(E)$ contains $G$ and the first condition is already satisfied.

Now consider the tiling of $S^{n-1}$ produced by the great subspheres $\partial K$ as $K$ runs through $\mathcal K$. Let $X$ denote the union of these subspheres. Let $V$ denote the union of those components of the complement $X^{c}$ which meet $U$. The same set $F$ represents the components of $V$ and the same set of hemispheres is witness to the fact that $V$ is $n$-tame. Therefore, by maximality, $V=U$. It is now clear that each component is polyhedral.

Let $\mathcal H$ be the set of those members $H$ of $\mathcal K$ with the property that some component of $\overline U$ is adjacent to $\partial H$. Part (i) of the theorem holds for $\mathcal H$.

Suppose now that $H\in\mathcal H$ as above.  Let $B_r(p)$ denote the ball in $S^{n-1}$ of radius $r$ about the point $p$.  By (i) and the adjacency condition there is some point $p\in \partial H$ and $r>0$ with $B=B_r(p)\cap H$ contained in $U$.  For each $n\in\N$ the open set $U\cup B_{r/n}(p)$ is not $n$-tame by maximality. Thus there is an $n$-tuple $F_n\in S^{n-1}\times\dots\times S^{n-1}$, whose entries form a balanced set in $U\cup B_{r/n}(p)$, at least one of whose points is not in $U$.  We may choose a convergent subsequence whose limit $F$ gives a balanced set of $m\le n$ elements, with $p\in F$.  By construction the points of $F\setminus\{p\}$ are in the closure of $U$.  We have established that $U$ is a finite union of polyhedral components so we may choose arcs $\gamma_i:[0,1)\rightarrow U$, where $i$ runs from $1$ to $m$, with $F\setminus\{p\}=\{\gamma_1(1),\dots,\gamma_m(1)\}$.

Note that each $\gamma_i$ is contained in $\overline H$; otherwise as in the proof of Lemma \ref{nudge} we could nudge $p$ into $B$ and each $\gamma_i(1)$ into $U$ giving a balanced $m$-point set in $U$.  Let $W$ be the union of $B$ and the arcs $\gamma_i$; it follows that $W[n-1]$ contains a neighbourhood $A$ of $\partial H$ in $\overline H$.  The union $N$ of $\overline H$ and $-A$ satisfies condition (ii) of the theorem.

For the last sentence of the theorem suppose $C_1$, $C_2$ are two components of a weakly maxtame set $U$ with $c$ in the intersection of $\overline{C_1}$ and $\overline{C_2}$.  
Let $C=C_1\cup C_2$.  By the remark after Lemma \ref{nudge}, $c$ is addable so that $U\cup\{c\}$ is $n$-tame.  Note that $C\cup\{c\}$ is path-connected. By Lemma \ref{peter}, the union of $U$ with $(C\cup\{c\})[2]$ is $n$-tame and therefore so is the open set
$U\cup C[2]$; this violates maximality of $U$.
\end{proof}

\section{Maxtame sets on the $2$-sphere: constructions and examples}

We begin with the second part of Theorem B, an effective method for constructing maxtame sets on a $2$-sphere from any weakly maxtame starting point. For clarity, here is a self-contained statement of the theorem. It is the case $n=3$ of Conjecture B.

\begin{theoremb2}
Let $U$ be a $3$-tame subset of $S^{2}$ which is either
\begin{enumerate}
\item
closed; or
\item
open with finitely many connected components.
\end{enumerate}
Then $U$ is contained in a maxtame subset.
\end{theoremb2}

In the proof we shall use the case $n=3$ of the following result.

\begin{lemma}
Let $X$ be a subset of $S^{n-1}$ which is $n$-tame but not $(n+1)$-tame. Then every path component of $S^{n-1}\setminus X[n-1]$ is convex.
\end{lemma}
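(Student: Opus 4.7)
The plan is to reduce the claim to an application of Lemma~\ref{peter}. By Lemma~\ref{whatisaddable}, a point $v$ lies in $S^{n-1}\setminus X[n-1]$ precisely when $-v$ is addable to $X$, so $S^{n-1}\setminus X[n-1]=-X^{+}$. Thus, given $v,w$ in the same path component $\Omega$ joined by a path $\gamma\colon[0,1]\to\Omega$, the antipodal image $-\gamma([0,1])$ is a path-connected subset of $X^{+}$, i.e.\ each of its points may be individually adjoined to $X$ without breaking $n$-tameness.

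The main technical step is to verify that $U:=X\cup(-\gamma([0,1]))$ is itself $n$-tame. Granting this, take $C:=-\gamma([0,1])$ as a path-connected subset of $U$ and apply Lemma~\ref{peter}: the set $U\cup C[2]$ is $n$-tame. The antipodal short geodesic $-\sigma$ from $-v$ to $-w$ lies in $(-\gamma)[2]\subseteq C[2]$, so $X\cup(-\sigma)$ is $n$-tame, meaning every $-\sigma(t)$ is addable and hence every $\sigma(t)\in S^{n-1}\setminus X[n-1]$. Since $\sigma$ is then a path from $v$ to $w$ inside the complement, it lies inside $\Omega$, yielding the desired convexity.

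To prove $U$ is $n$-tame, suppose otherwise and let $E\subseteq U$ be a balanced subset with $|E|\leq n$ of minimum cardinality. Since $X$ itself is $n$-tame, $E$ meets $-\gamma$; since $X\cup\{-\gamma(t)\}$ is $n$-tame for each fixed $t$, $E$ contains at least two distinct antipodal path points $-\gamma(s_1),-\gamma(s_2)$, and minimality of $|E|$ forces every element of $E$ to appear with a strictly positive coefficient in the balanced relation. Deform $-\gamma(s_1)$ along $-\gamma$ towards $-\gamma(s_2)$: set $E(t):=(E\setminus\{-\gamma(s_1)\})\cup\{-\gamma(t)\}$ for $t$ in the sub-interval from $s_1$ to $s_2$, and $d(t):=\mathrm{dist}(0,\mathrm{conv}\,E(t))$. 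Then $d$ is continuous with $d(s_1)=0$, while $d(s_2)>0$: for $t=s_2$ the set $E(s_2)$ collapses to size $|E|-1$, and if it were balanced this would contradict minimality. Let $t^{*}:=\sup\{t:d(t)=0\}$. Then $E(t^{*})$ is balanced, and if its balanced relation has any coefficient equal to zero, a proper subset of $E(t^{*})$ is balanced, contradicting minimality.

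The delicate case, where every coefficient at $t^{*}$ remains strictly positive, is the main obstacle I anticipate. Here the hypothesis that $X$ is not $(n+1)$-tame should enter: the balanced $(n+1)$-subset $B\subseteq X$ combined with $E(t^{*})$ via a matroid-style circuit elimination is expected to yield a balanced subset strictly smaller than $E$, again contradicting minimality of $|E|$. Alternatively, one may vary several of the parameters $s_1,\dots,s_r$ at once and use a compactness/extreme-point argument on the closed balanced locus inside $[0,1]^{r}$: at an extremal configuration, either two parameters coincide (dropping $|E|$ by one) or a coefficient of the balanced relation must degenerate to zero (producing a proper balanced subset). Either resolution removes the remaining case, completes the proof that $U$ is $n$-tame, and hence establishes the lemma.
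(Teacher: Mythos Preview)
Your reduction via Lemma~\ref{whatisaddable} and Lemma~\ref{peter} is sound: if $U=X\cup(-\gamma([0,1]))$ were $n$-tame, the remainder of your argument would indeed yield the convexity. The gap is exactly where you locate it. Your $t^{*}$ construction produces a balanced set $E(t^{*})$ of the \emph{same} cardinality as $E$, not a smaller one; when all coefficients at $t^{*}$ are strictly positive this is no contradiction to minimality of $|E|$, and neither of your proposed fixes is carried out. The ``matroid-style circuit elimination'' is only a slogan here: the balanced $(n+1)$-subset $B\subseteq X$ and the set $E(t^{*})$ need not share any elements, so there is no evident circuit to eliminate. The multi-parameter extremal argument is likewise only gestured at; at an extremal configuration nothing forces either a coincidence of parameters or a vanishing coefficient, since the balanced locus in $[0,1]^{r}$ can perfectly well be a manifold of positive dimension in its interior. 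Note too that the lemma is false without the ``not $(n+1)$-tame'' hypothesis (see the Remark after Corollary~\ref{convexity}), so any valid argument must use it in an essential way; in your outline its role remains opaque.

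The paper's proof is quite different and sidesteps this difficulty. Fixing once and for all a balanced $(n+1)$-point subset $F\subseteq X$, one shows that for \emph{each} $u\in X[n-1]$ there is a simplicial triangulation of $S^{n-1}$ with vertices in $X$ whose $(n-2)$-skeleton contains $u$: start from an $(n-1)$-simplex in $X[n-1]$ containing $u$, then cone off successively to the points of $F$ until the whole sphere is covered. The open top-dimensional faces of this triangulation are convex and lie in $S^{n-1}\setminus X[n-1]$, so two points $v,w$ in the same path component of $S^{n-1}\setminus X[n-1]$ lie in a common face, and the short geodesic joining them avoids $u$. Since $u$ was arbitrary, that geodesic lies in $S^{n-1}\setminus X[n-1]$. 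Here the ``not $(n+1)$-tame'' hypothesis has a transparent role: it supplies enough vertices in $X$ to complete any partial complex to a triangulation of the whole sphere.
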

\begin{proof}
This argument rests on the fact that every point of the sphere has a natural \emph{rank with respect to $X$}. For $0\le\ell\le n$ we say that $v\in S^{n-1}$ has rank $\ell$ if and only if $v$ belongs to $X[\ell+1]$ but does not belong to $X[\ell]$. The hypothesis that $X$ is not $(n+1)$-tame ensures that $X[n]=S^{n-1}$ and hence every point of the sphere has a rank $\ell$ in the range $0\le\ell\le n-1$. The points of $X$ itself are the points of rank $0$. The set $S^{n-1}\setminus X[n-1]$ consists precisely of the points of rank $n-1$. %We write $X\{n-1\}$

To prove the lemma we first fix a balanced $(n+1)$-point subset $F$ of $X$, i.e. a witness to the hypothesis that $X$ is not $(n+1)$-tame. Since $X$ is $n$-tame, each $n$-point subset of $F$ has a spherical hull which is a face of a simplex triangulating $S^{n-1}$.

For any $n$-point subset $E$ of $X$, the spherical convex hull $\Delta$ of $E$ is a simplex properly contained in the sphere. Taking the points of $F$ one by one, we may build up a simplicial triangulation of $S^{n-1}$ in which $\Delta$ is a face. At each step, ignore points which lie in the spherical hull of the complex so far constructed. To add a point $x$ of $F$ when it does not belong to that hull we simply cone it off to those faces of the simplicial complex already constructed that are \emph{visible} from $x$, in the sense that there are short geodesics from these faces to $x$ which contain no points of the complex in their interior.  At last there must come a point of $F$ which, when added, results in a complex that covers the entire sphere since, with $E=\emptyset$, adding all points of $F$ would achieve this.

Now suppose that $v$ and $w$ are points of rank $n-1$ 
in the same path component of %$X\{n-1\}$. 
$S^{n-1}\setminus X[n-1]$.  If $u$ is any point of $X[n-1]$ then the above argument shows that $u$ lies in the $(n-2)$-skeleton of some triangulation of the sphere whose vertices belong to $X$. The $(n-1)$-faces  of this triangulation are convex and are the path components of the complement of its $(n-2)$-skeleton. Therefore $v$ and $w$ both belong to the same convex component and hence the short geodesic between them lies within that component and so does not contain $u$. This shows that no point of $X[n-1]$ lies on the geodesic from $v$ to $w$ and the convexity follows.
\end{proof}

The following is immediate on combining this with Lemma \ref{whatisaddable}.

\begin{corollary}\label{convexity}
Let $U$ be an $n$-tame subset of $S^{n-1}$ which is not $(n+1)$-tame. Then each connected component of the set of addable points is convex.
\end{corollary}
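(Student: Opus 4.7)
The plan is to apply the preceding lemma with $X=U$ (note that $n$-tame and not $(n+1)$-tame is exactly the hypothesis there) and then transfer the conclusion across the antipodal map. Concretely, by Lemma \ref{whatisaddable}, the set of addable points is $U^{+}=S^{n-1}\setminus -U[n-1]$, and the antipodal involution $\sigma\colon x\mapsto -x$ is an isometry of $S^{n-1}$ carrying $S^{n-1}\setminus U[n-1]$ bijectively onto $U^{+}$.

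The preceding lemma tells us that every path component of $S^{n-1}\setminus U[n-1]$ is (spherically) convex. Since $\sigma$ sends short geodesics to short geodesics, it carries convex subsets of $S^{n-1}$ to convex subsets; it also preserves path components. Thus each path component of $U^{+}$ is convex, and in particular path-connected. A convex subset of $S^{n-1}$ is path-connected and hence connected, so each such path component is a full connected component of $U^{+}$. This establishes the corollary.

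There is no real obstacle; the only mild subtlety is the shift from path components (as delivered by the lemma) to connected components (as stated in the corollary), which is handled by the observation that a convex, hence path-connected, subset cannot sit properly inside any strictly larger connected subset of the complement.
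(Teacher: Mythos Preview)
Your proof is correct and follows exactly the route the paper intends: the paper simply says the corollary is ``immediate on combining'' the preceding lemma with Lemma~\ref{whatisaddable}, and you have spelled out precisely that combination via the antipodal isometry. You even flag the path-component versus connected-component discrepancy, which the paper passes over silently; your handling of it is adequate for the level of rigour here.
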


\begin{remark}
If $U$ is open and $(n+1)$-tame then $U$ is contained in an open hemisphere. In this case, the set of addable points is the union of all closed hemispheres containing $U$ and it is equal to the complement of the spherical convex hull of $-U$. In particular the set of addable points of a non-empty $(n+1)$-tame set consists of a single component and is never convex.
\end{remark}

\begin{proof}[Proof of Theorem B2]
The reader will notice that much of the following reasoning could be applied in higher dimensions. However, at a critical point of the proof we have found the need to take advantage of the low dimension: we draw attention to this point when it arises. 

By Lemma \ref{whatisaddable} if $U$ is $3$-tame in $S^{2}$ then the set $U^{+}$ of addable points is equal to $-U[2]^{c}$, the complement of the $2$-hull of the antipodal set of $U$. If $U$ is a union of convex spherical polygons, (for example, if $U$ is weakly maxtame) then clearly $U^{+}$ is as well. 

Using Theorem B1, we may assume that $U$ is weakly maxtame and in particular that $U$ is a disjoint union of convex spherical polygons.  
If $U$ is $4$-tame then $U$ is contained in an open hemisphere and we are done. Therefore we may assume that there is a balanced $4$-point subset $F=\{a,b,c,d\}$. Then $F$ must consist of exactly $4$ points and its convex hull in Euclidean space is a tetrahedron which contains the origin in its interior. Tameness guarantees that $U$ is contained in the complement of $-F[2]$, the $2$-hull of the antipodal set of $F$. We naturally regard $-F[2]$ as the $1$-skeleton of a simplicial subdivision of $S^{2}$. The (open) faces $\Phi_{1},\Phi_{2},\Phi_{3},\Phi_{4}$ contain the set $U^{+}$ of all addable points. We now show how to replace $U$ by a larger open set $U_{0}$ which is still polyhedral and $n$-tame so that 
$U\subseteq U_{0}\subset U_{0}^{+}\subseteq U^{+}$ and so that $\Phi_{1}$ does not meet the interior of $U_{0}^{+}\setminus U_{0}$. Let $C_{1},\dots,C_{\ell}$ be an enumeration of the connected components of the interior of $U^{+}\setminus U$ that lie in $\Phi_{1}$. Note that $U\cup C_{1}$ is $n$-tame, for if not then there would be a balanced set containing at least one point of $C_{1}$. However, such a balanced set can always be replaced by one that contains a single point of $C_{1}$ because $C_{1}$ is convex and since $C_{1}$ consists entirely of addable points, this single point can be added without creating a balanced set. Therefore we replace $U$ by $U_{(1)}:=U\cup C_{1}$. The set of addable points now becomes potentially smaller but remains polyhedral. Therefore when we turn to the component $C_{2}$ it maybe that the interior of $C_{2}':=C_{2}\cap U_{(1)}^{+}$ is properly contained in $C_{2}$ and furthermore, $C_{2}'$ might consist of more than one component. 

At this point in the argument we need to use the hypothesis that $n=3$. Therefore, for the remainder of the proof we shall make this assumption.

The set $U_{(2)}:=U_{(1)}\cup(U_{1}^{+}\cap C_{2})$ is $3$-tame. To see this suppose for a contradiction that there is a balanced $3$-point subset inside $U_{(2)}$. Then at least one of the $3$ points must lie outside $U_{1}$ because $U_{(1)}$ is $3$-tame. Moreover, if only one of the three points lies outside $U_{(2)}$ then there can be no contradiction beause each one point is addable. If all three points lie outside of $U_{(2)}$ then we would have all three points lying on the face $\Phi_{1}$ and each face is contained in an open hemisphere. So there must be exactly two points $c,c'$ outside $U_{(2)}$ and one point $u$ inside $U_{(2)}$. The balanced condition means antipodal point $-u$ lies on the geodesic joining $c$ to $c'$ and therefore $-u$ lies in the convex set $C_{2}$. Since $\Phi_{1}$ is contained in an open hemisphere and $-u$ lies in this face, it follows that $u$ does not lie in $\Phi_{1}$. In particular, $u$ is not in the set $C_{1}$ and therefore $u$ belongs to the original set $U$. However, every point of $C_{2}$ was addable with respect to $U$ and so $-u$ is addable, a contradiction. Thus $U_{(2)}$ is $3$-tame. This part of the reasoning does not generalize straightforwardly to higher dimensions.

We now turn to $C_{3}$. The process of adding $C_{1}$ and then adding the splinters of $C_{2}$ that remained addable after adding $C_{1}$ may have splintered $C_{3}$ into many components, but the same argument allows us to add the interiors of these all in one step.

Continuing in this way, we may visit each of the sets $C_{1},\dots,C_{\ell}$ in turn, adding what can be added, preserving polyhedrality and arriving at a situation where the interior of the set of addable points has empty intersection with $\Phi_{1}$. This new set may be labelled $U_{0}$. 

We now turn to the next face $\Phi_{2}$ of $\Delta$. The process carried through for $\Phi_{1}$ may have resulted in the number of components of addable points on $\Delta$ increasing dramatically. This has not mattered because it is only now that we enumerate the components and proceed as with the first face. Going on in this way, we can complete the process by visiting each face of $\Delta$ and arriving at a maxtame set as required.

\end{proof}

We conclude this section with some examples of maxtame sets on $S^{2}$. 
The simplest example is shown in Figure \ref{fig1}. The construction of this example is simple and can be generalized in a number of ways. Any choice of $4$ great circles in general position divides the sphere into $14$ regions: $8$ spherical triangles and $6$ spherical quadrilaterals. There are two ways (antipodally related to each other) to choose $4$ of the triangles in order to obtain a maxtame set. Figure \ref{fig1} shows one of these choices. The example is the first in two infinite families of maxtame sets on $S^{2}$: see Figure \ref{fig2} (Family A) and Figure \ref{fig3} (Family B).

\begin{figure}[htbp]
%  figure placement: here, top, bottom, or page
   \centering
   \includegraphics[width=2in]{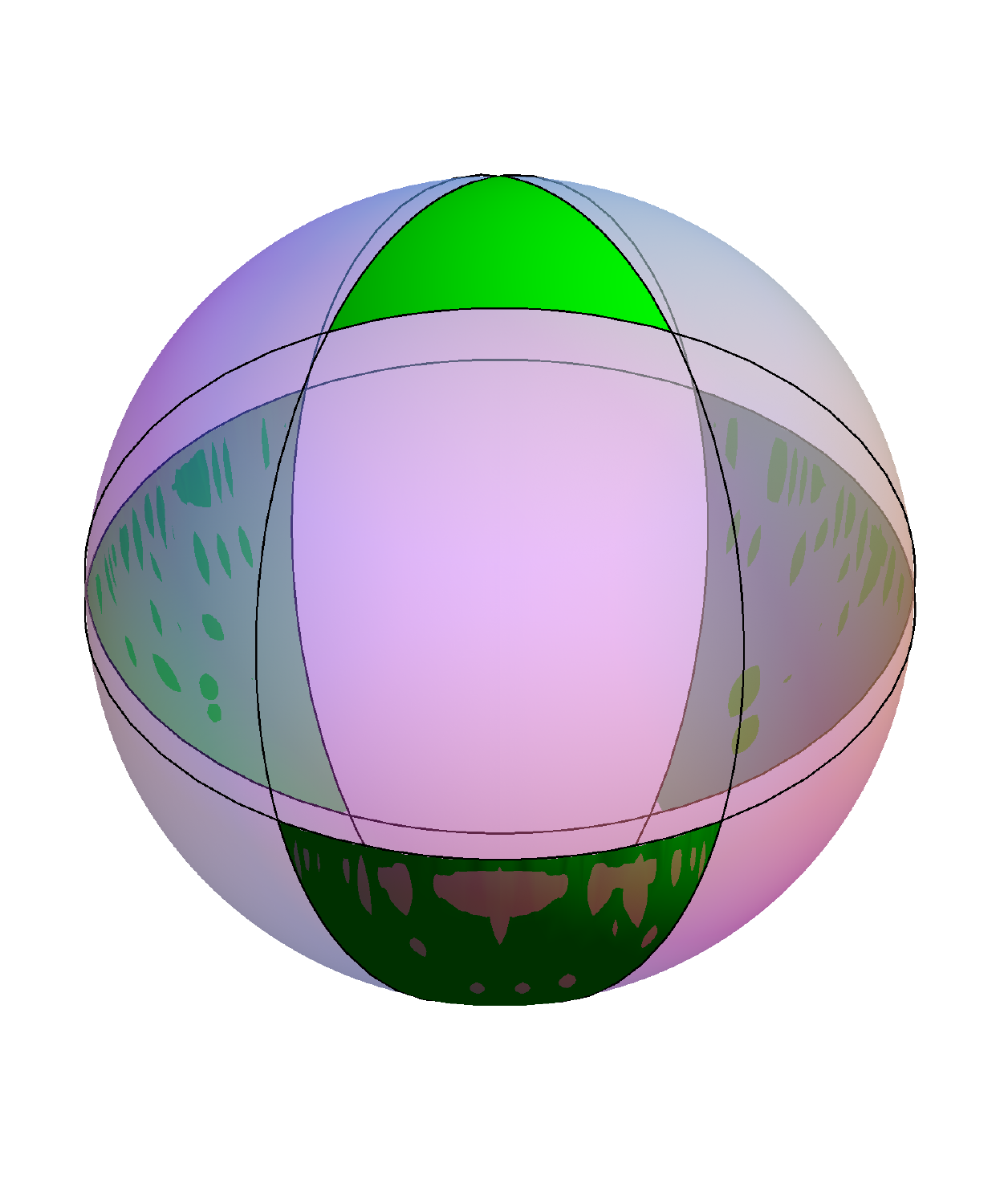} 
   \caption{A $4$-component maxtame set on $S^{2}$}
   \label{fig1}
\end{figure}

\begin{figure}[htbp] %  figure placement: here, top, bottom, or page
\begin{minipage}[b]{0.3\linewidth}
\centering
\includegraphics[scale=0.3]{S2-3T-3D-4Ca.pdf}
\end{minipage}
\begin{minipage}[b]{0.3\linewidth}
\centering
\includegraphics[scale=0.3]{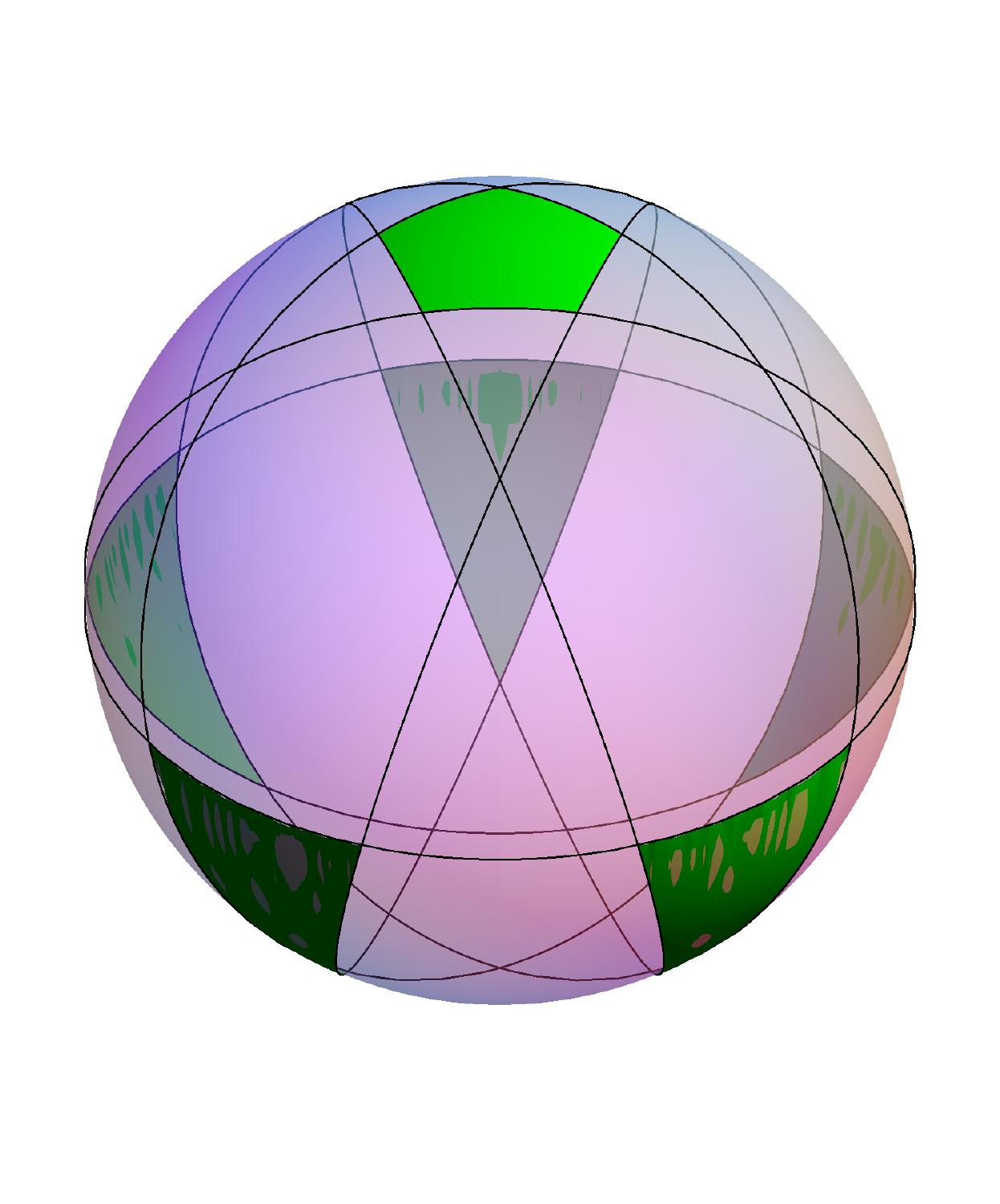}
\end{minipage}
\begin{minipage}[b]{0.3\linewidth}
\centering
\includegraphics[scale=0.3]{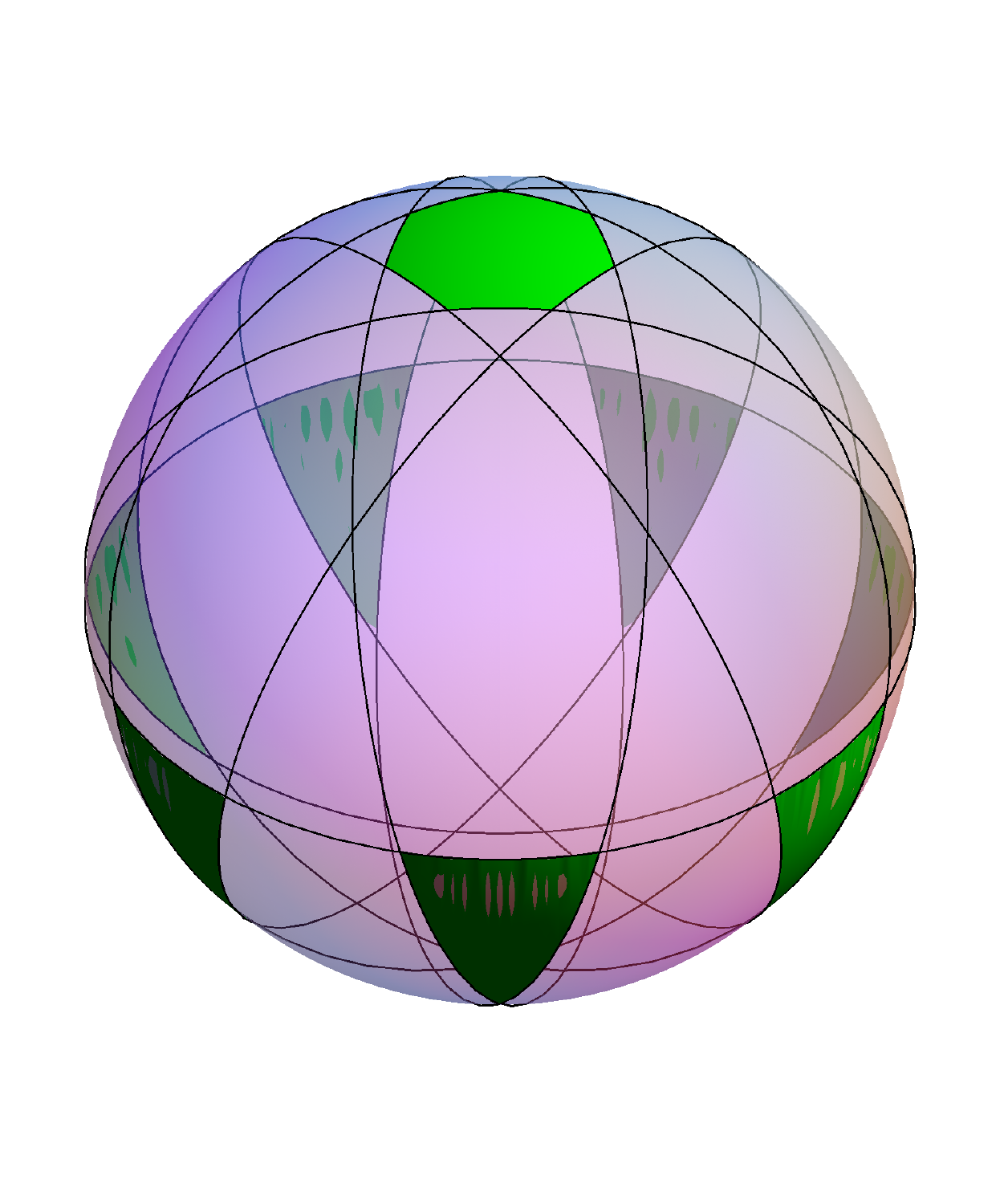}
\end{minipage}
\begin{minipage}[b]{0.3\linewidth}
\centering
\includegraphics[scale=0.3]{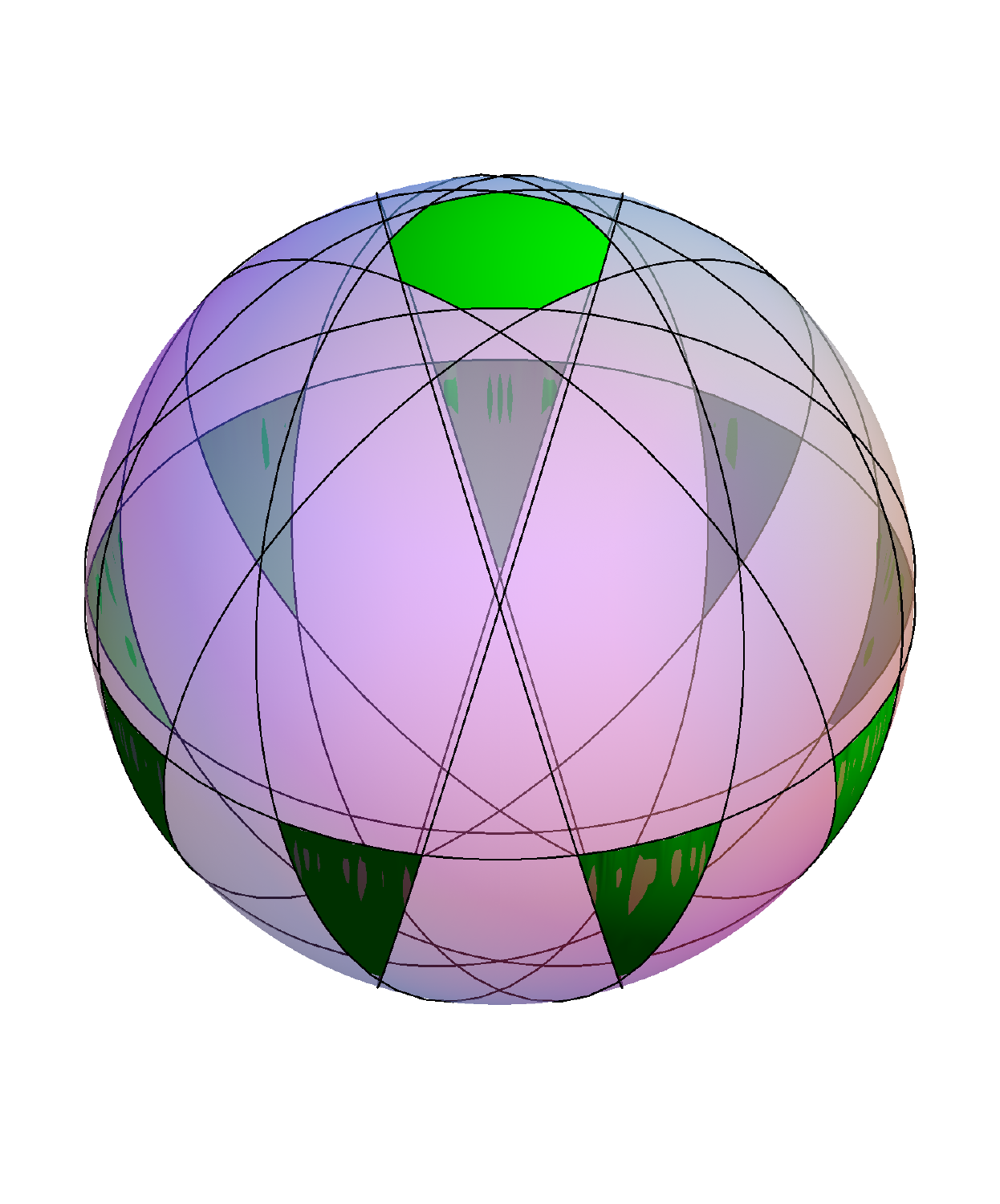}
\end{minipage}
\begin{minipage}[b]{0.3\linewidth}
\centering
\includegraphics[scale=0.3]{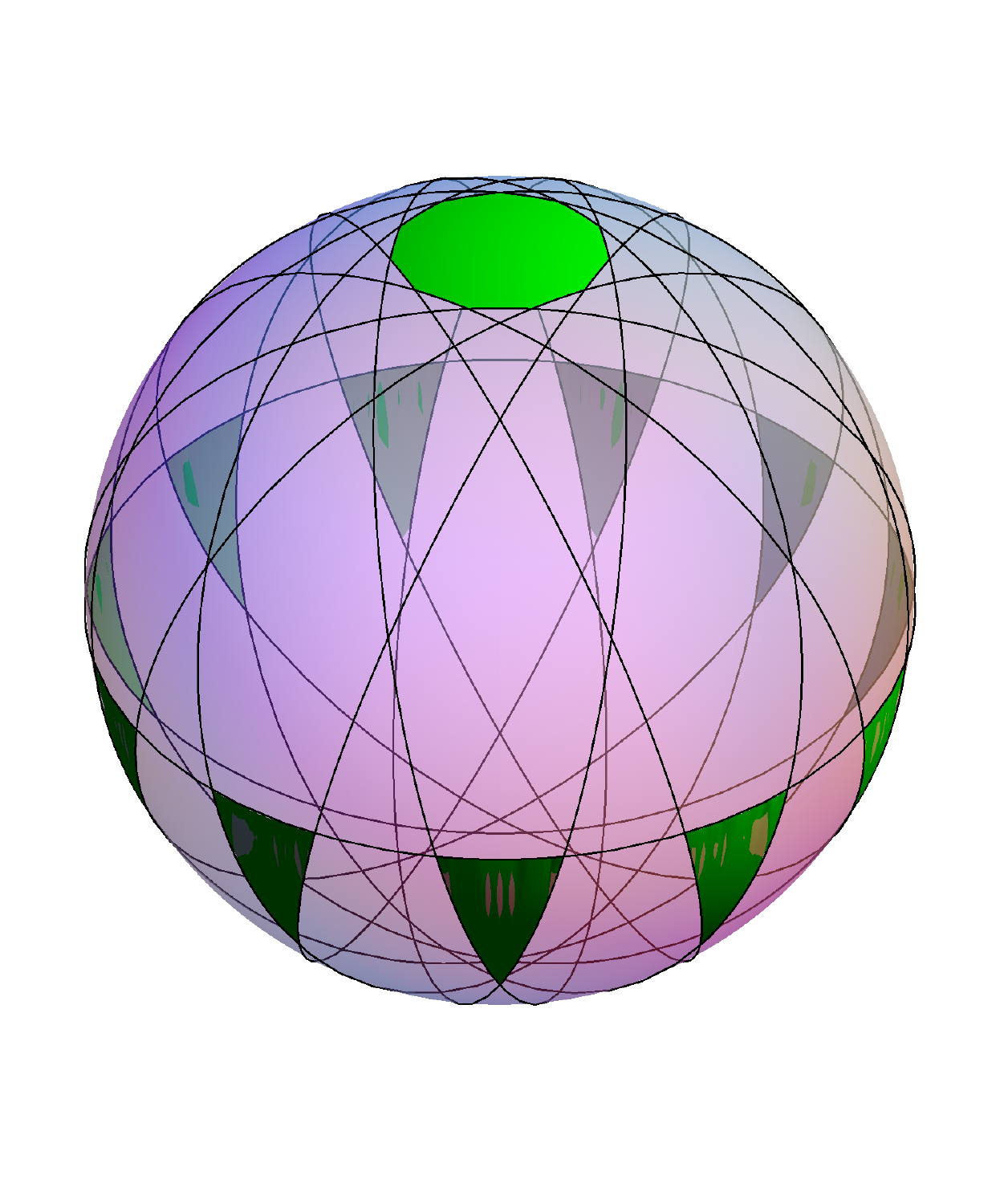}
\end{minipage}
\begin{minipage}[b]{0.3\linewidth}
\centering
\includegraphics[scale=0.3]{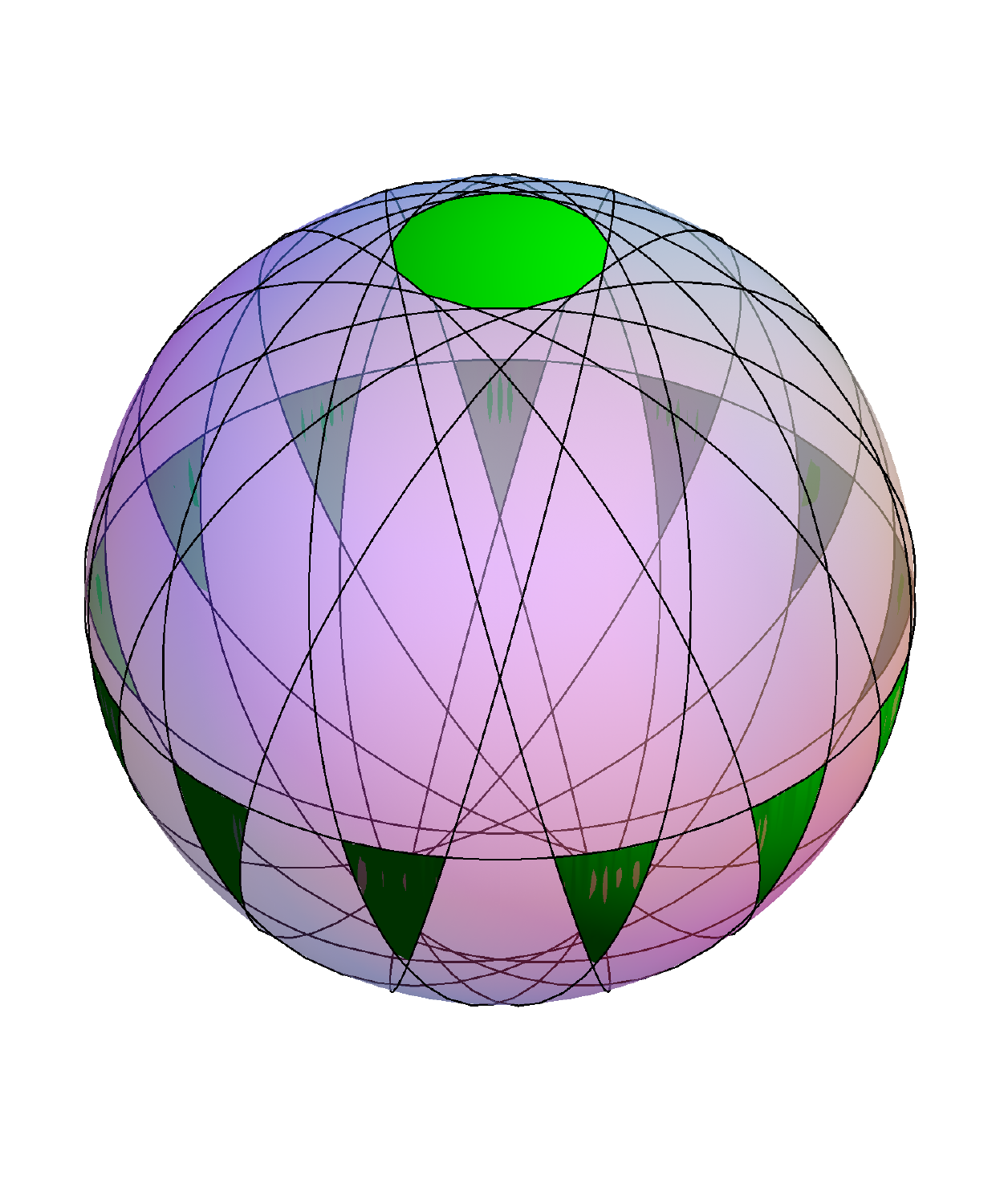}
\end{minipage}
   \caption{Family A: $3$-dimensional rendition. The $k$th member consists of a single $(2k+1)$-gon together with $2k+1$ triangles.}
   \label{fig2}
\end{figure}

\begin{figure}[htbp] %  figure placement: here, top, bottom, or page
\begin{minipage}[b]{0.4\linewidth}
\centering
\includegraphics[scale=0.35]{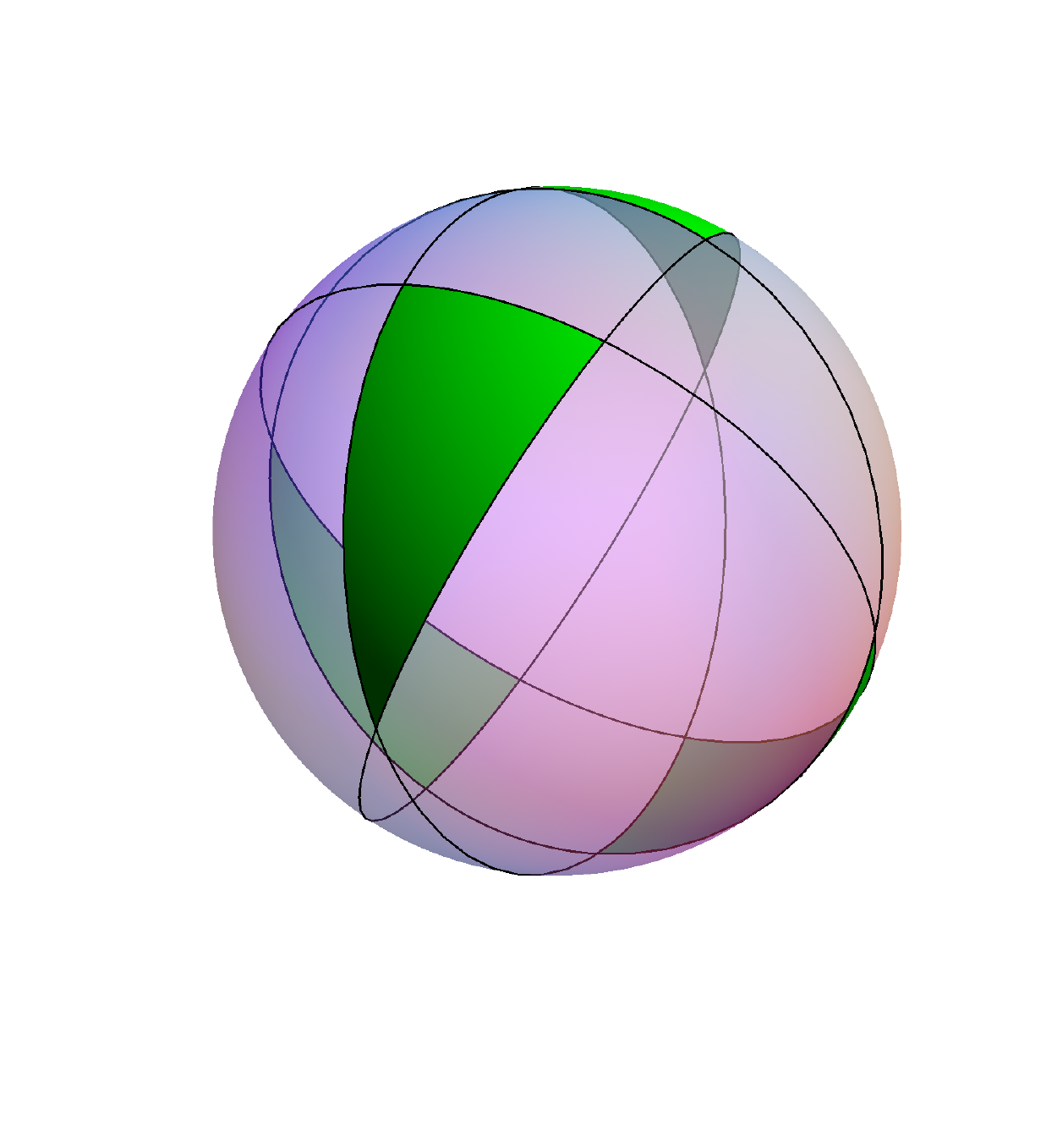}
\end{minipage}
\begin{minipage}[b]{0.4\linewidth}
\centering
\includegraphics[scale=0.35]{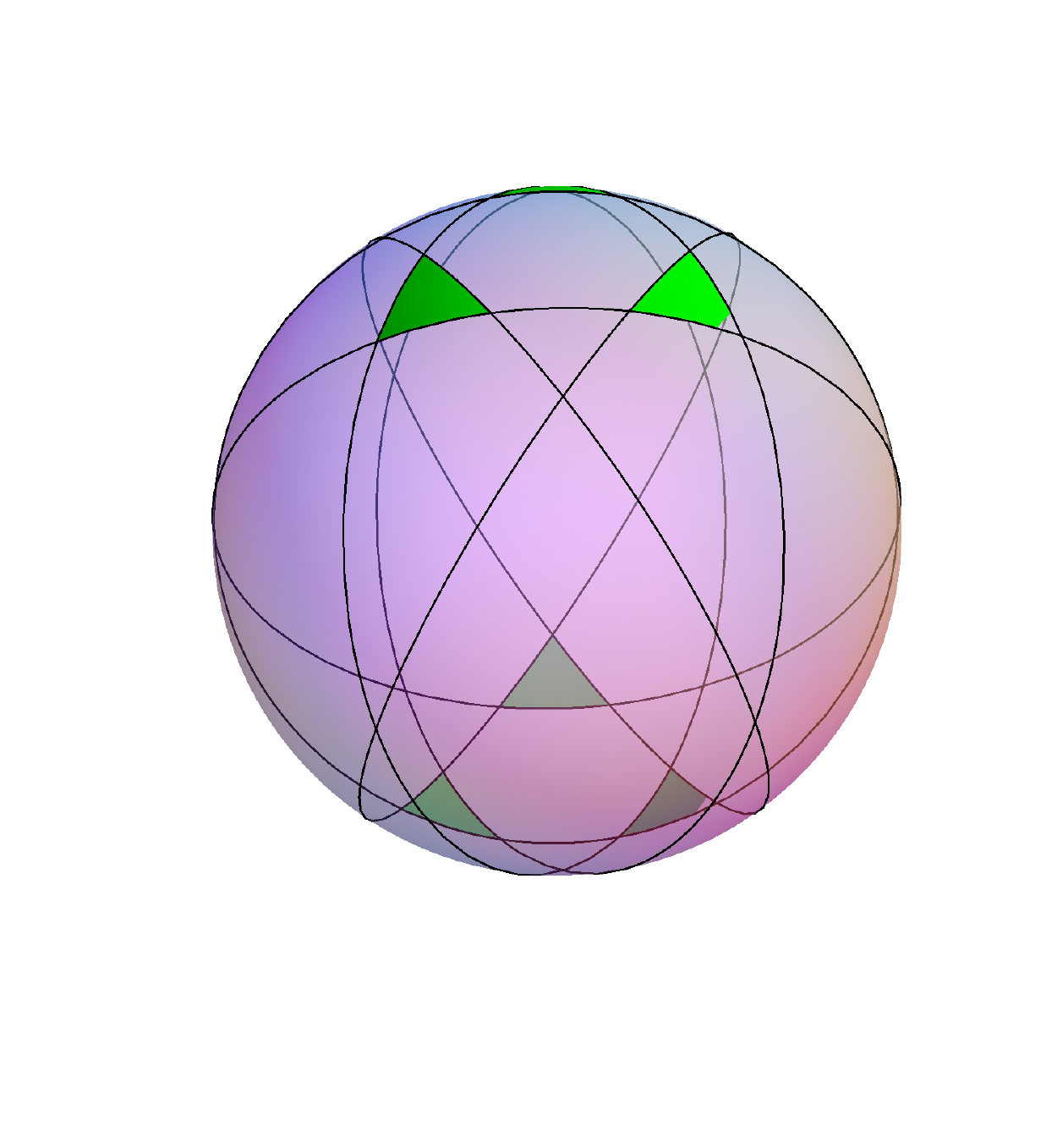}
\end{minipage}
\begin{minipage}[b]{0.4\linewidth}
\centering
\includegraphics[scale=0.35]{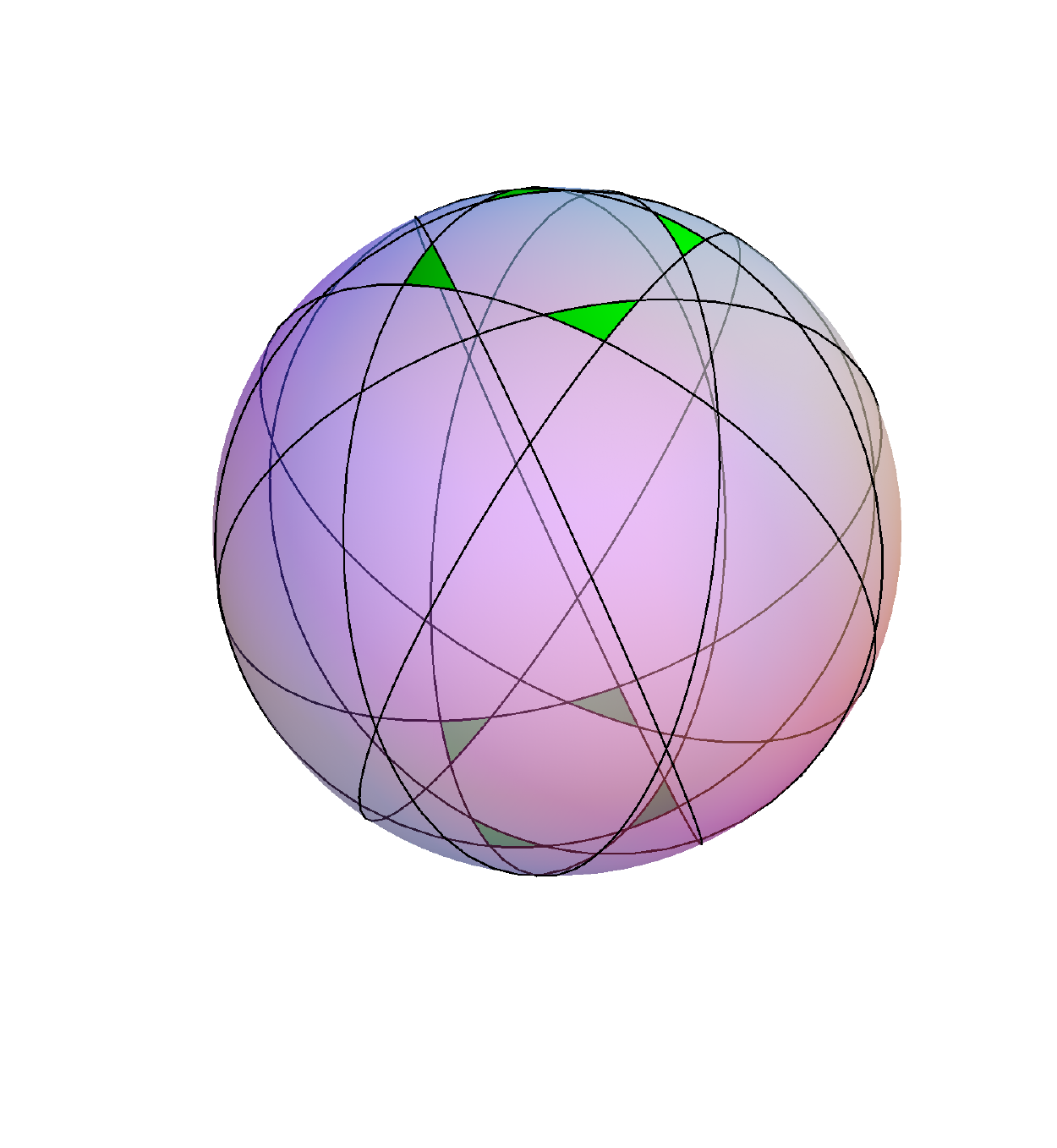}
\end{minipage}
\begin{minipage}[b]{0.4\linewidth}
\centering
\includegraphics[scale=0.35]{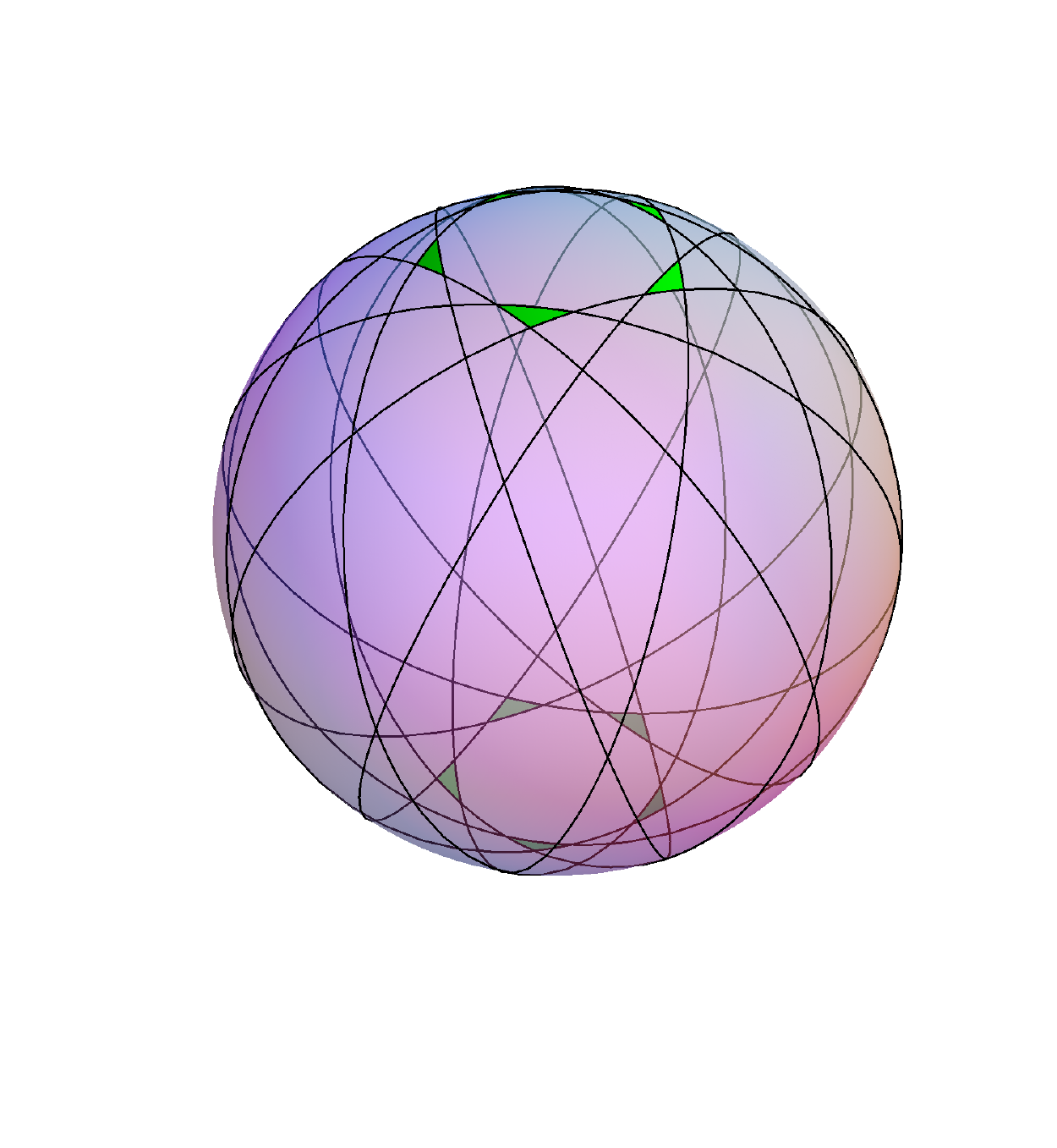}
\end{minipage}
   \caption{Family B: $3$-dimensional rendition. The $k$th member consists of $2k+2$ triangles.}
   \label{fig3}
\end{figure}

Figure \ref{fig4} shows these same sets in stereographic projection from the south pole.

In both cases it can be checked straightforwardly that the shaded set $U$ satisfies the condition $U=-\overline U[2]^{c}$ so that the interior of the set of addable points is exactly $U$. This allows us to deduce that all these sets are indeed maxtame as claimed. The sets in Family A satisfy the stronger condition, $\overline U=-U[2]^{c}$ meaning that the closure $\overline U$ is precisely the set of addable points. In the case of Family B, there are some isolated addable points: in particular, assuming that the set is drawn symmetrically around a north-south axis then the north and south poles are addable. Being isolated, these addable points do not allow the sets to be enlarged, i.e. they are already maxtame. However it is possible to perturb these sets in order to produce maxtame sets with additional regions but an otherwise similar appearance.

Checking that the sets in these Figures are $3$-tame and then maxtame can be done using the great circles in the picture. For example, $9$ 
great circles are shown in Figure \ref{fig5fig6}. If three points are chosen from the triangular regions then one can see by inspection that at least one of those great circles separates those points into a single hemisphere. Checking maxtameness involves constructing the antipodal set $-U$ and then its $2$-hull $-U[2]$. Again this is easy to carry out by inspection of the diagrams.

Figure \ref{fig5fig6} shows how a perturbation of the $6$-component member of Family B gives rise to a seventh addable region.
Pertubations of the other sets in Family B may give rise to other interesting examples with an odd number of components. We shall show later that there are restrictions: in particular there are no maxtame sets with $3$ or $5$ components.

\section{Maxtame sets on $S^{1}$ and their configuration space}

On the circle $S^{1}$ there are two maxtame sets associated with any choice of an odd number of distinct pairs of antipodal points as illustrated in Figure \ref{fig:2tameS1}.

\begin{figure}[htbp] %  figure placement: here, top, bottom, or page
   \centering
   \includegraphics[width=2.5in]{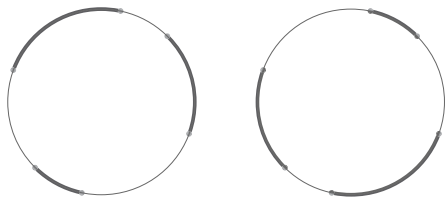} 
   \caption{Two maxtame sets on $S^{1}$ sharing the same $6$-point boundary}
   \label{fig:2tameS1}
\end{figure}

It is then interesting to consider the configuration space $\mathcal S$ of all such subsets. The double cover $S^{1}\to \RP^{1}$ induces a map from $\mathcal S$ to the configuration space of odd numbers of points on the circle $\RP^{1}$, and $\mathcal S$ is a connected double cover of this space.

\begin{landscape}
\begin{figure}[htbp] %  figure placement: here, top, bottom, or page
   \centering
   \includegraphics[width=7.1in]{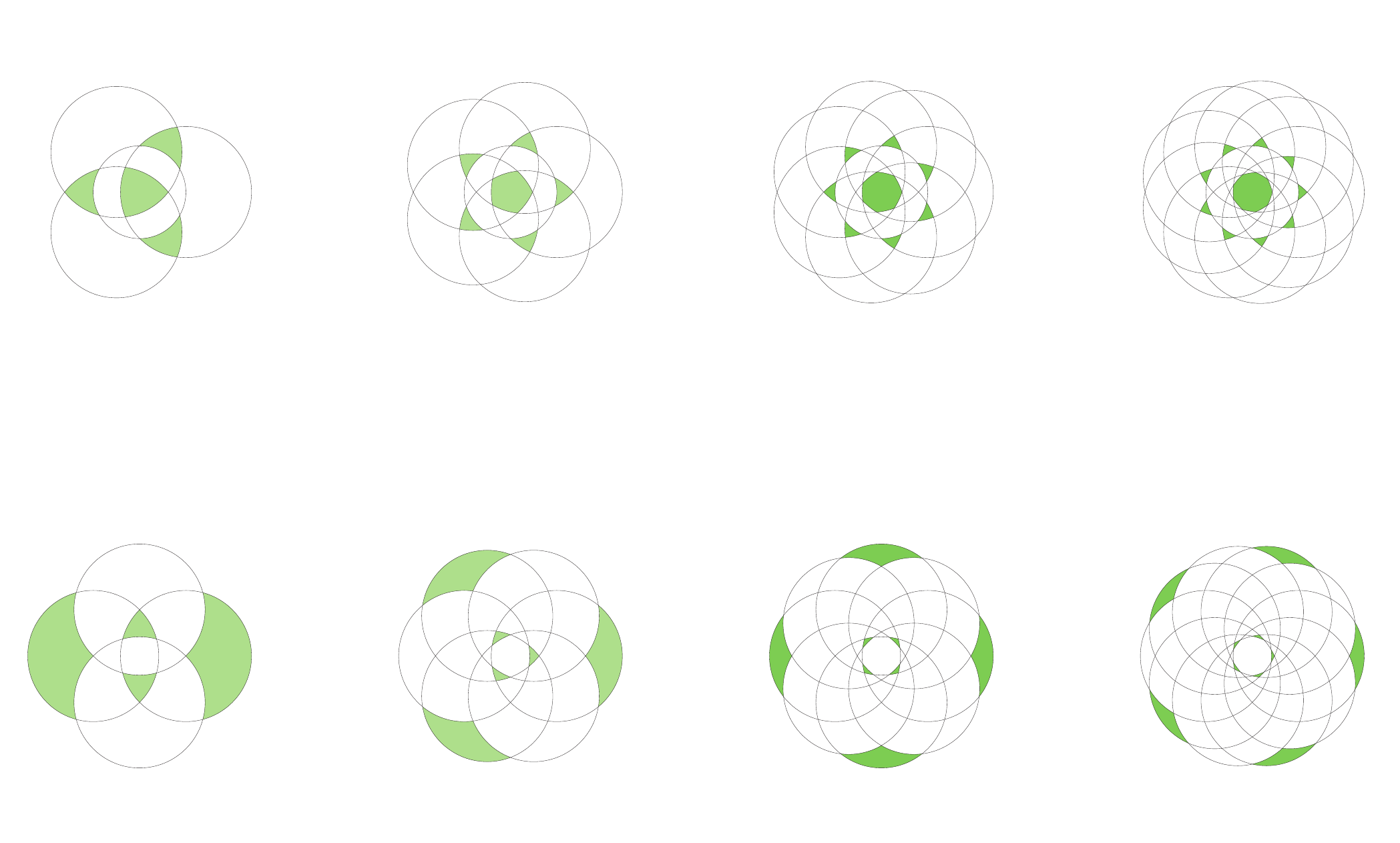} 
   \caption{Families A and B in stereographic projection}
   \label{fig4}
\end{figure}
\end{landscape}

%\begin{figure}[htbp] %  figure placement: here, top, bottom, or page
%   \centering
%   \includegraphics[width=5in]{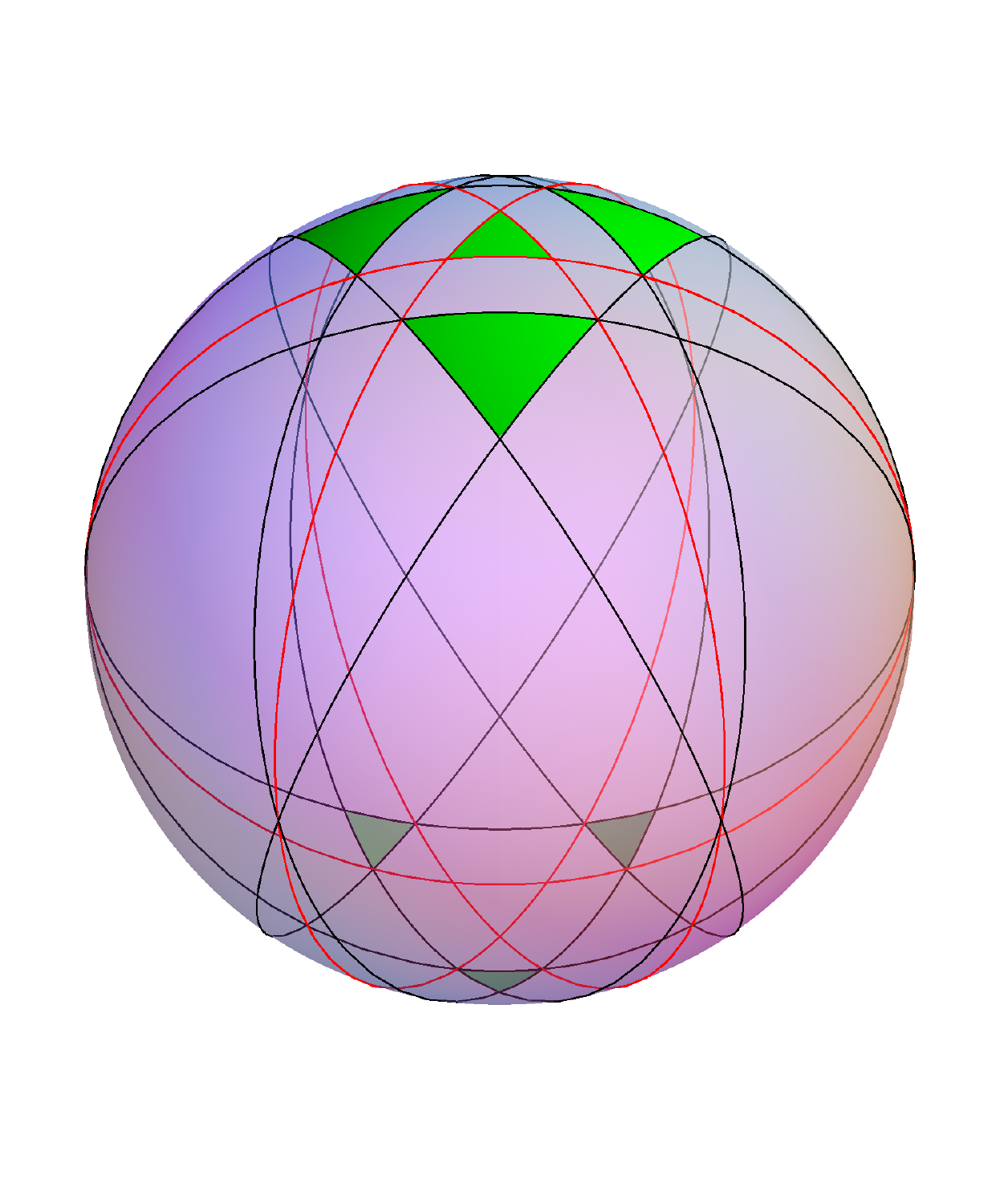} 
%   \caption{A maxtame set with $7$ components}
%   \label{fig5}
%\end{figure}
%
%\begin{figure}[htbp] %  figure placement: here, top, bottom, or page
%   \centering
%   \includegraphics[width=5in]{S2-3T-2D-7C-labels.pdf} 
%   \caption{Stereographic projection of the $7$-component set shown in Figure \ref{fig5}}
%   \label{fig6}
%\end{figure}

\begin{figure}[htbp] %  figure placement: here, top, bottom, or page
\begin{minipage}[b]{0.45\linewidth}
\centering
\includegraphics[scale=0.3]{S2-3T-3D-7C.pdf}
\end{minipage}
\begin{minipage}[b]{0.45\linewidth}
\centering
\includegraphics[scale=0.3]{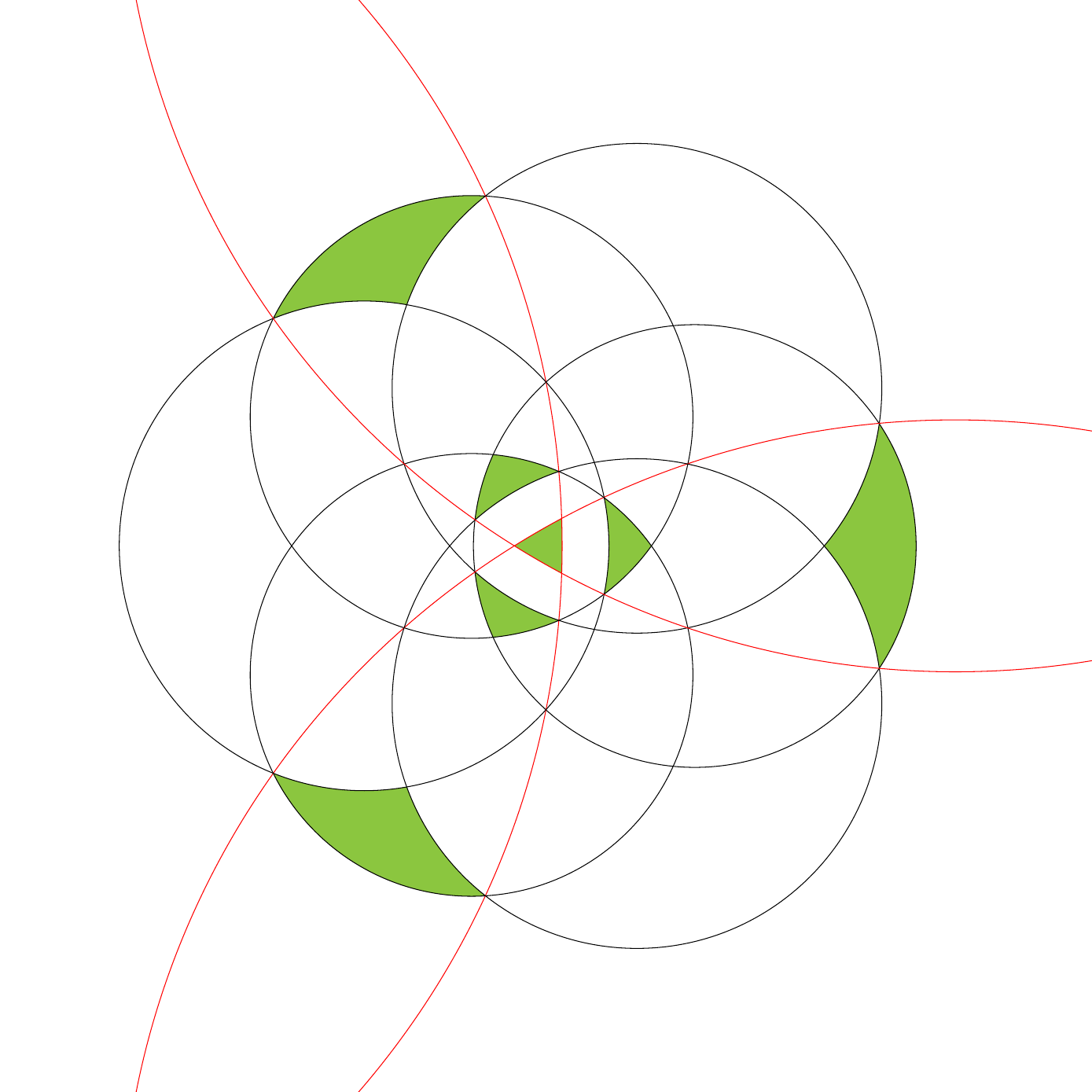}
\end{minipage}
   \caption{A maxtame set with $7$ components}
   \label{fig5fig6}
\end{figure}

\begin{lemma}
The configuration space of maxtame subsets of $S^{1}$ is homeomorphic to the infinite sphere $S^{\infty}$.
\end{lemma}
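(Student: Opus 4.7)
The plan is to stratify $\mathcal{S}$ by the number of antipodal boundary pairs and identify it with the direct limit of finite-dimensional spheres using antipodally odd trigonometric polynomials. Let $\mathcal S_k$ be the subspace of maxtame sets with exactly $2k+1$ antipodal boundary pairs; by the preceding identification of $\mathcal S_k$ with the connected double cover of $UC_{2k+1}(\RP^{1})$, each $\mathcal S_k$ is a connected $(2k+1)$-manifold. Set $\overline{\mathcal S_{\le k}} := \bigcup_{i \le k}\mathcal S_i$; the natural topology on $\mathcal S$, under which a collision of two adjacent antipodal boundary pairs produces a limit in $\mathcal S_{k-1}$ (after replacing the naive open-set limit by its maxtame hull), turns this into a direct system with $\mathcal S = \bigcup_k \overline{\mathcal S_{\le k}}$.

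The parametrisation uses the real vector space
\[
V_k := \operatorname{span}_{\R}\bigl\{\cos((2j+1)x),\;\sin((2j+1)x)\bigr\}_{j=0}^{k}
\]
of dimension $2k+2$. Every $q \in V_k$ satisfies $q(x+\pi) = -q(x)$, so $U_q := \{q > 0\}$ is disjoint from $-U_q = \{q < 0\}$, while $\overline{U_q} \cup -\overline{U_q} = S^1$ forces maximality; hence every nonzero $q \in V_k$ gives rise to a maxtame set $U_q$. Conversely, for $U \in \mathcal S_j$ with boundary $\{\pm p_1, \ldots, \pm p_{2j+1}\}$, the polynomial
\[
q_U(x) = \varepsilon\prod_{i=1}^{2j+1}\sin(x - p_i)\;\in\;V_j\subset V_k,
\]
with sign $\varepsilon = \pm 1$ chosen so that $q_U > 0$ on $U$, satisfies $U_{q_U} = U$ and is the unique such polynomial in $V_j$ up to positive scalar (a dimension count: vanishing at $2j+1$ points on $\RP^{1}$ imposes $2j+1$ linear conditions on $V_j$ of dimension $2j+2$).

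The inclusions $V_k \hookrightarrow V_{k+1}$ induce the standard equatorial inclusions $S(V_k) \hookrightarrow S(V_{k+1})$ of unit spheres, compatible under the assignment $U \mapsto [q_U] := q_U/\|q_U\|_{L^2}$ with the stratum inclusions $\overline{\mathcal S_{\le k}}\hookrightarrow \overline{\mathcal S_{\le k+1}}$. Passing to the direct limit gives a bijection $\Psi\colon \mathcal S\to S^\infty := \varinjlim S(V_k)$ with $\Psi^{-1}$ the map $q \mapsto U_q$; bijectivity follows from uniqueness of the minimal representative in each stratum, and continuity of $\Psi^{-1}$ is built into the parametrisation.

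The hard part is the continuity of $\Psi$ itself across strata. When two antipodal pairs $\pm p_1^n,\,\pm p_2^n$ of $U_n \in \mathcal S_k$ coalesce to $\pm p$, the product representative converges in $V_k$ to $\sin^2(x - p)\,q_{U_0}(x)$, which differs from the minimal-degree representative $q_{U_0} \in V_{k-1}$ of the limit $U_0 \in \mathcal S_{k-1}$ by a non-negative factor; both represent the same maxtame set, but sit at different points of $S(V_k)$. Reconciling this with the direct-limit topology on $S^\infty$, under which $q_{U_0} \in S(V_{k-1})$ is the correct limit via the equatorial inclusion, is where the bulk of the technical work lies; it reduces to a careful analysis of the natural topology on $\mathcal S$ (formalised via the maxtame hull construction) together with a matching of the stratification of $S^\infty$ with that of $\mathcal S$.
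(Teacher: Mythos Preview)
Your trigonometric-polynomial parametrisation is an attractive idea, but the map $\Psi$ you construct is neither surjective nor continuous, and the gap you flag at the end is fatal rather than merely technical.

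Surjectivity fails already for $k=1$: take $q(x)=\sin^{3}x=\tfrac14(3\sin x-\sin 3x)\in V_{1}\setminus V_{0}$. Then $U_{q}=\{\sin x>0\}$ lies in $\mathcal S_{0}$, so its minimal representative is $\sin x\in V_{0}$, and the point $q/\lVert q\rVert\in S(V_{1})$ is not in the image of $\Psi$. In general the fibre of $q\mapsto U_{q}$ over $U_{0}\in\mathcal S_{j}$ is the set of all $h\cdot q_{U_{0}}$ with $h\ge 0$ an even-frequency trigonometric polynomial of degree at most $2(k-j)$, a cone of positive dimension whenever $j<k$. Thus $q\mapsto U_{q}$ is a genuine (non-injective) quotient map $S(V_{k})\to\overline{\mathcal S_{\le k}}$, and your section $\Psi$ misses an open set.

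Continuity fails for exactly the reason you identify: under a collision, $q_{U_{n}}\to\sin^{2}(x-p)\,q_{U_{0}}$ in $S(V_{k})$, which is a different point from the equatorial image of $q_{U_{0}}\in S(V_{k-1})$. No appeal to the direct-limit topology on $S^{\infty}$ repairs this, since the discontinuity already occurs inside the single finite-dimensional sphere $S(V_{k})$. So as it stands you have only produced a continuous surjection from $S^{2k+1}$ onto $\overline{\mathcal S_{\le k}}$, not a homeomorphism.

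The paper's argument avoids these difficulties by working on the other side of the double cover. It shows directly that the configuration space $C_{\ell}$ of $\ell$ unordered points on $\RP^{1}\cong[0,1]/(0\!\sim\!1)$ is $\RP^{\ell}$: a fundamental domain for the $S_{\ell}$-action on $[0,1]^{\ell}$ is the $\ell$-simplex $\{0\le x_{1}\le\cdots\le x_{\ell}\le 1\}$; the face $x_{1}=0$ is identified with the face $x_{\ell}=1$, and each diagonal face $x_{i}=x_{i+1}$ is collapsed into their common boundary, which amounts precisely to the antipodal identification on the boundary of an $\ell$-ball. One then checks that $C_{\ell-2}\hookrightarrow C_{\ell}$ is the standard $\RP^{\ell-2}\hookrightarrow\RP^{\ell}$, so $\bigcup_{\ell\text{ odd}}C_{\ell}=\RP^{\infty}$ and the connected double cover $\mathcal S$ is $S^{\infty}$.
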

\begin{proof}
In view of the above remarks it suffices to show that the configuration space of odd numbers of points on $\RP^{1}$ is homeomorphic to $\RP^{\infty}$. Represent $\RP^{1}$ as $[0,1]/\{0\sim1\}$.

Let $\ell$ be an odd number.  Then the configuration space $C_\ell$ of $\ell$ points is a quotient of $[0,1]^{\ell}$, where we quotient by the 
action of the symmetric group $S_{\ell}$ and also by the equivalence relation
$(x_{1},\dots,x_{\ell})\sim(y_{1},\dots,y_{\ell})$ if and only if there exists indices $i\ne j$ such that $x_{i}=x_{j}$, $y_{i}=y_{j}$ and $x_{k}=y_{k}$ for $k\in\{1,\dots,\ell\}\setminus\{i,j\}$.

We almost get a fundamental domain for the $S_{n}$-action by taking 
\[\{(x_{1},\dots,x_{\ell});\ 0\le x_1 \le x_2 \le ... \le x_\ell \le 1\}.\]
This gives a polygonal subset 
$B$, homeomorphic to a ball (indeed it is an $\ell$-simplex), of the unit cube.  The face $x_1=0\sim1$ gets identified with the face $x_\ell=1$ by the $S_\ell$-action.  The other faces all have $x_i=x_{i+1}$ and thus get "crushed" into the boundary equator separating the above two faces.  Thus we have an $\ell$-ball with its boundary points identified in antipodal pairs, i.e. $\RP^\ell$. Note that the subset with $x_\ell=1$ is similarly homeomorphic to $\RP^{\ell-1}$ included into $\RP^\ell$ in the standard way.  Thus the subset with $x_{\ell-1}=x_\ell=1$ which is $C_{\ell-2}$ is $\RP^{\ell-2}$ included into $\RP^\ell$ in the standard way.  Thus the union of all $C_\ell$ is $\RP^\infty$.
\end{proof}

\section{Maximal open $2$-tame sets on $S^{1}$ with infinitely many components}

We include two examples showing that the finiteness condition $|\pi_{0}|<0$ cannot be removed from Definition 1.1 without severely altering the landscape. We'll view $S^{1}$ as the set of unit complex numbers.

Note that if $U$ is maximal open and $2$-tame on any sphere then $C:=(U\cup-U)^{c}$ is antipodally symmetric, closed and must have no interior points. In the first example we take the simplest of such sets, namely a symmetric pair of discrete sequences converging to limit points.

\begin{example}
Let $C$ denote the subset $\{\pm e^{\pi it};\ t=1,\frac12,\frac13,\dots,\frac1n,\dots\}\cup\{\pm1\}$ of $S^{1}$. Then there is a maximal open $2$-tame $U$ such that $C=(U\cup-U)^{c}$.
\end{example}
\begin{proof}
Let $f:[0,1]\to S^{1}$ be the map $t\mapsto e^{\pi it}$. Then we can take $U$ to be the union 
\[f(\tfrac12,1)\cup -f(\tfrac13,\tfrac12)\cup
f(\tfrac14,\tfrac13)\cup -f(\tfrac15,\tfrac14)\cup
f(\tfrac16,\tfrac15)\cup -f(\tfrac17,\tfrac16)\cup\cdots.\]
\end{proof}

Our second example involves a Cantor set. 

\begin{example}
We write all numbers in ternary instead of decimal. Thus $0.1\dot1$ denotes ``zero point one recurring'' and hence exactly a half. We consider the set $C$ of points in the interval $[0,11]$ (i.e. real numbers between zero and four) which can be expressed in ternary using only the digits $0$ and $2$. Note that recurring $2$s are permitted here even though a number such as $0.220\dot2$ is equal to the expression $0.221$ involving a $1$. 
Let $p:[0,11]\to S^{1}$ be defined by $t\mapsto e^{0.1\dot1\pi it}$.
Let $Y$ be the set of intervals of ternary numbers of the form
$(a_{0}\cdot a_{1}a_{2}\dots a_{n}1,a_{0}\cdot a_{1}a_{2}\dots a_{n}2)$ where the digits $a_{i}\in\{0,2\}$ include an even number of $2$s. Let $U$ be the set $\bigcup_{I\in Y}p(I)$. Then $U$ is maximal open and $2$-tame and  $U\cup-U$ has complement the Cantor set $p(C)$.
\end{example}
		
\section{Non-polygonal maximal open $2$-tame subsets of $S^{2}$}

The polyhedrality associated with maxtame sets does not apply in general to maximal open $k$-tame sets in $S^{n-1}$ when $k<n$ even if there are finitely many connected components. Figure \ref{fig7fig8}(left) shows a maximal open $2$-tame set obtained by starting with the open northern hemisphere, carving a flower shape, and placing the antipodal set in the southern hemisphere. Clearly any shape from the northern hemisphere can be used so long as it is the interior of its closure. Thus we see that polyhedrality fails for maximal open $2$-tame sets on $S^{2}$ and that such sets exists with any finite number of components $\ge1$. As well as the obvious example with a single component, namely a hemisphere, there are also exotic $1$-component maximal open $2$-tame sets such as the one illustrated in Figure \ref{fig7fig8}(right).

%\begin{figure}[htbp] %  figure placement: here, top, bottom, or page
%   \centering
%   \includegraphics[width=5in]{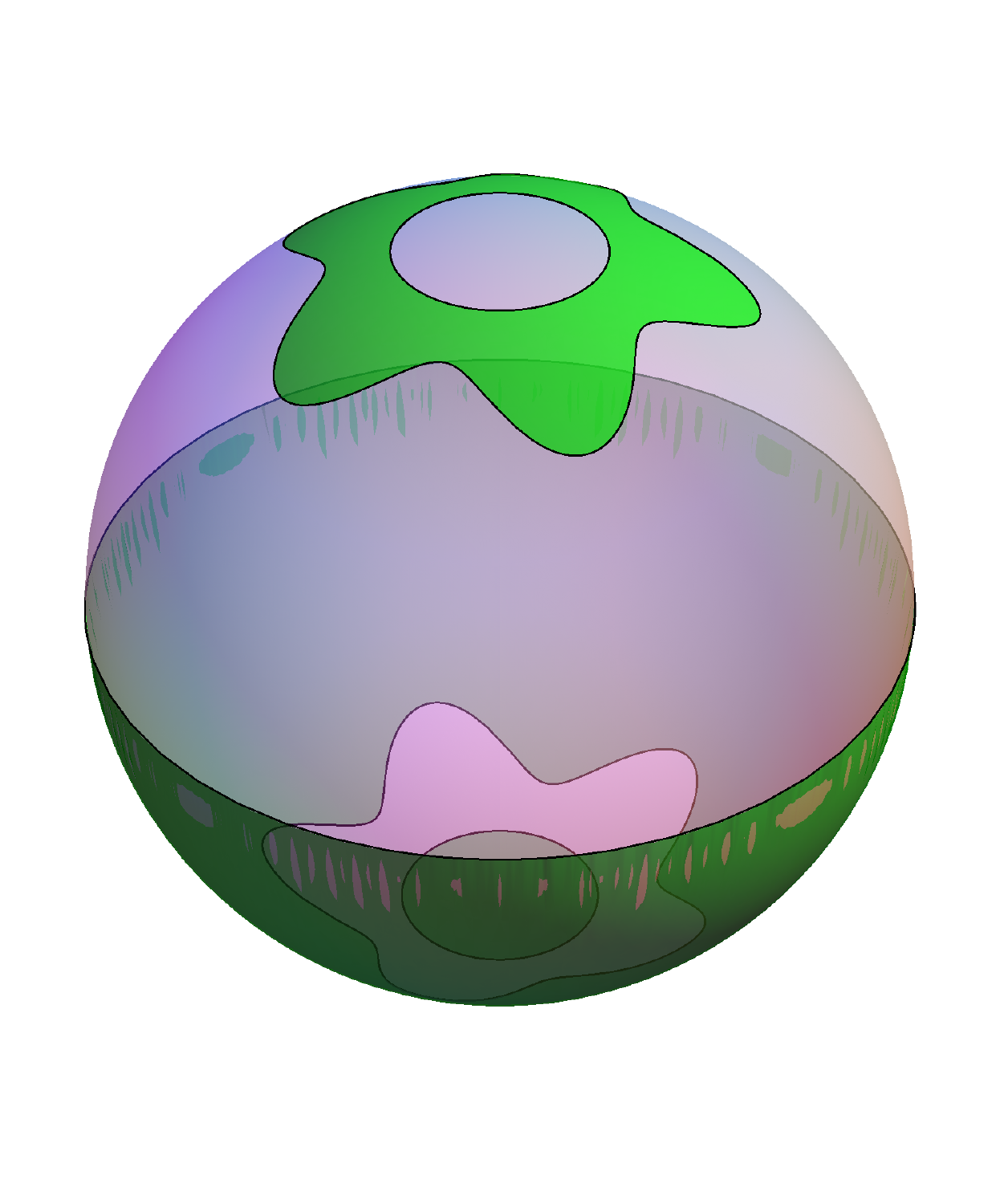} 
%   \caption{A non-polyhedral maximal open $2$-tame set on $S^{2}$}
%   \label{fig7}
%\end{figure}
%
%\begin{figure}[htbp] %  figure placement: here, top, bottom, or page
%   \centering
%   \includegraphics[width=5in]{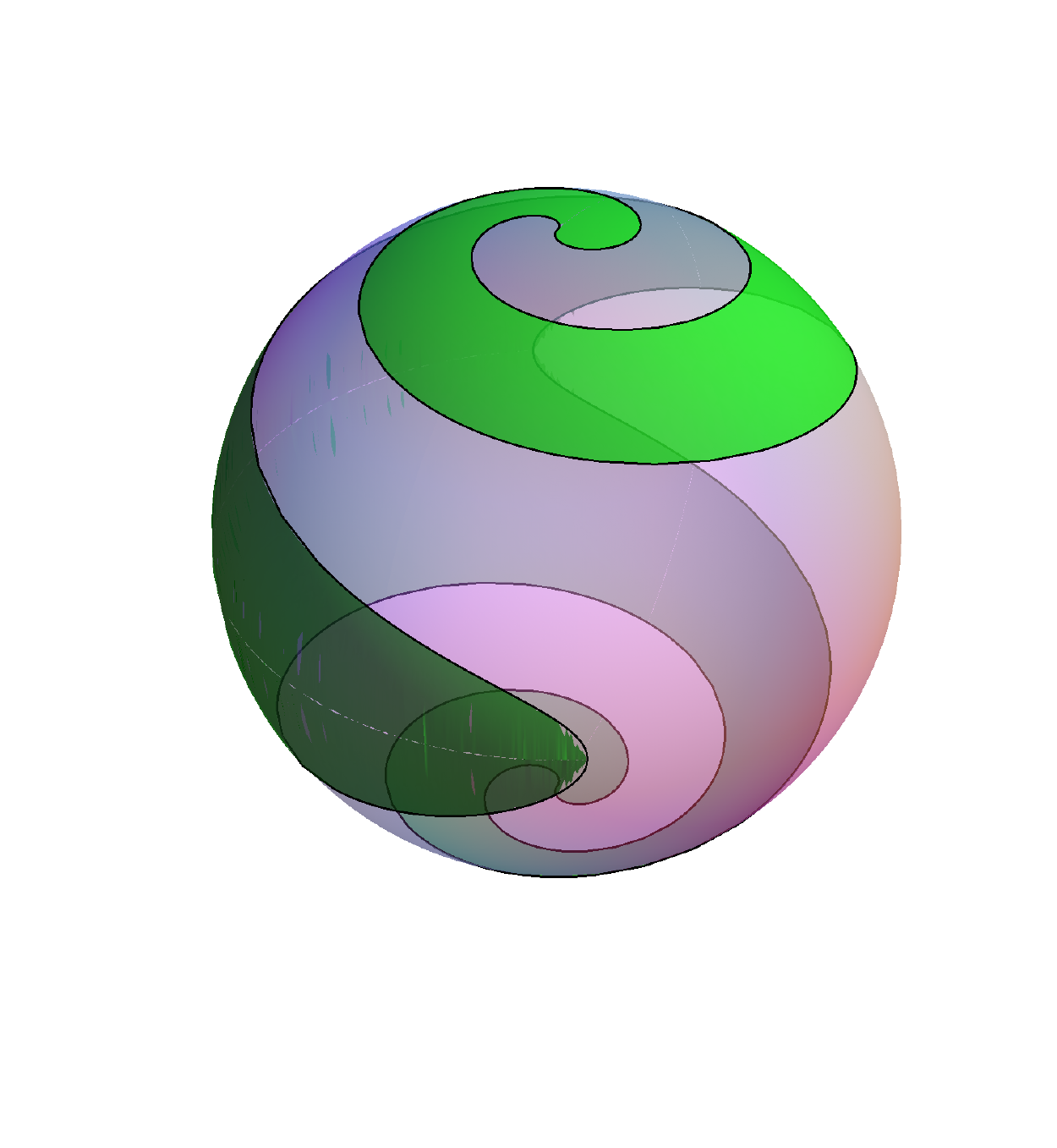} 
%   \caption{An exotic maximal open $2$-tame set with a single connected component}
%   \label{fig8}
%\end{figure}

\begin{figure}[htbp]

 %  figure placement: here, top, bottom, or page
\begin{minipage}[b]{0.45\linewidth}
\centering
\includegraphics[scale=0.3]{S2-2T-3D-2C.pdf}
\end{minipage}
\begin{minipage}[b]{0.45\linewidth}
\centering
\includegraphics[scale=0.35]{S2-2T-3D-1C.pdf}
\end{minipage}
   \caption{Non-polyhedral maximal open $2$-tame sets on $S^{2}$}
   \label{fig7fig8}
\end{figure}

\section{Maxtame subsets of $S^{n-1}$ with $n+1$ components}

In this section we explain how the $3$-component sets in Figure \ref{fig:2tameS1} and the $4$-component sets in Figures \ref{fig1},\ref{fig2},\ref{fig3},\ref{fig4} belong to a family in which the $n$th members are maxtame $(n+1)$-component subsets of $S^{n-1}$.

Fix $n\ge2$. Choose $n+1$ points $v_{0},\dots,v_{n}$ on the $(n-1)$-sphere such that their convex hull in $\R^{n}$ contains the origin as an interior point. For each $i$ choose a closed hemisphere $H_{i}$ which contains every $v_{j}$ with $j\ne i$ in its interior but not containing $v_{i}$. Let $\partial_{i}$ denote the boundary of $H_{i}$. These boundaries carve the sphere into $2^{n+1}-2$ regions in the following way. For each choice of $n+1$-tuple $(\epsilon_{0},\dots,\epsilon_{n})\in\{\pm\}^{n+1}$ the intersection $\bigcap_{i}\epsilon_{i}H_{i}$ is non-empty except in the two cases when all the $\epsilon_{i}$ are equal to each other. We use the shorthand notation $\epsilon_{0}\epsilon_{1}\dots\epsilon_{n}$ to denote the intersection. 

Let $U_{i}$ be the spherical $n$-simplex obtained by taking $\epsilon_{j}=+$ for $j\ne i$ and $\epsilon_{i}=-$. Let $U$ denote the union of the $U_{i}$. 

\begin{lemma}
The interior of $U$ is maxtame.
\end{lemma}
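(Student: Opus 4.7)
The plan is to verify in turn that $\text{int}(U)$ is $n$-tame and that it is maximal among open $n$-tame subsets of $S^{n-1}$. For $n$-tameness, a pigeonhole argument suffices: any $n$ points in $\text{int}(U)=\bigsqcup_{i=0}^n\text{int}(U_i)$ must omit some component $\text{int}(U_k)$, and for $i\ne k$ the constraint $\epsilon_k=+$ places $\text{int}(U_i)$ inside the open hemisphere $\text{int}(H_k)$, so the chosen points lie in a common open hemisphere.

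For maximality, I argue by contradiction: let $V\supsetneq\text{int}(U)$ be open and $n$-tame. As $\overline U\setminus\text{int}(U)$ is nowhere dense, one may choose $y\in V\setminus\overline U$ lying in the open interior of one of the regions other than the $U_i$. Writing $J:=\{i:\epsilon_i(y)=+\}$, the condition $y\notin\overline U$ forces $|J|\in\{1,\dots,n-1\}$ (the case $|J|=n$ would put $y$ into some $U_k$). The goal is to produce points $x_i\in\text{int}(U_i)\subseteq V$ for each $i\in J$ so that $\{y\}\cup\{x_i:i\in J\}\subseteq V$ is balanced; this set has $|J|+1\le n$ elements, contradicting $n$-tameness of $V$.

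The technical heart of the argument is the cone identity
\[
\operatorname{cone}\Bigl(\bigcup_{i\in J}U_i\Bigr)\;=\;\bigl\{z\in\R^n:\langle z,u_k\rangle\ge 0\text{ for all }k\in J^c\bigr\},
\]
where $u_i\in S^{n-1}$ denotes the centre of $H_i$. To prove it, I would first establish a \emph{positive} dual relation $\sum_i s_iu_i=0$ with each $s_i>0$: a non-negative such relation exists by Farkas applied to the empty intersection $\bigcap_iH_i$, and if any $s_k$ vanished the same Farkas argument would force the region $-U_k$ (whose sign pattern has $+$ only at position $k$) to be empty, contradicting the assertion in the setup that all $2^{n+1}-2$ regions are non-empty. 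Using this relation, the inequality $\langle z,u_i\rangle\le 0$ in the defining description of $\operatorname{cone}(U_i)$ becomes redundant, so $\operatorname{cone}(U_i)=\{z:\langle z,u_j\rangle\ge 0\;\forall\,j\ne i\}$ with polar dual $\operatorname{cone}(u_j:j\ne i)$. A short calculation with the relation $\sum s_iu_i=0$ then shows $\bigcap_{i\in J}\operatorname{cone}(u_j:j\ne i)=\operatorname{cone}(u_k:k\in J^c)$, and the bipolar theorem yields the displayed identity.

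To conclude, the fact that $y$ is in the open interior of its region gives $\langle -y,u_k\rangle>0$ strictly for every $k\in J^c$, placing $-y$ in the interior of the cone above. Decomposing via the Minkowski-sum structure of the cone yields $-y=\sum_{i\in J}\mu_ix_i$ with $\mu_i>0$ and $x_i\in\text{int}(U_i)$, and the identity $0=y+\sum_i\mu_ix_i$ then exhibits the required balanced subset of $V$, giving the desired contradiction. The main obstacle is the cone identity itself, which rests on the positive dual relation among the $u_i$; once that is in hand, the rest of the argument reduces to bookkeeping with sign patterns.
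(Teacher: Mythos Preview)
Your argument is correct, and it establishes the result for general $n$ directly rather than illustrating the case $n=4$ as the paper does.  The two proofs are close cousins at heart: your cone identity
\[
\operatorname{cone}\Bigl(\bigcup_{i\in J}U_i\Bigr)=\{z:\langle z,u_k\rangle\ge 0\ \forall\,k\in J^c\}
\]
is exactly the linear-algebraic form of the paper's combinatorial ``hull recipe'' for sign-pattern regions (taking the union of antipodal simplices indexed by $I$ and replacing the $I$-coordinates by $\pm$).  The difference lies in packaging.  The paper works on the antipodal side, invoking Lemma~\ref{whatisaddable} ($U^+=S^{n-1}\setminus -U[n-1]$) and computing $-U[n-1]$ region by region via the recipe, checking that it covers everything outside $\operatorname{int}(U)$.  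You instead stay on the primal side: given $y\notin\overline U$ you locate $-y$ in the interior of the cone above (with $J$ the set of $+$-coordinates of $y$), and then use the Minkowski-sum structure to exhibit an explicit balanced $(|J|+1)$-point subset of $V$.

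Your route is slightly more self-contained: the positive relation $\sum s_iu_i=0$ (via Gordan/Farkas and the non-emptiness of the $U_k$), the redundancy of the $i$th inequality in $\operatorname{cone}(U_i)$, and the biduality step are all clean and general.  One small point worth making explicit in a write-up is that the $u_i$ are in general linear position (any $n$ of them independent): this is what lets you compare the two basis-expansions $w=\sum_{j\ne i_0}c_ju_j=\sum_{j\ne i_1}c'_ju_j$ and force the $J$-coefficients to vanish, and it follows from the non-degeneracy of the regions asserted in the setup.  Similarly, the passage from ``$-y$ lies in the interior of the Minkowski sum'' to ``$-y=\sum_{i\in J}z_i$ with each $z_i$ in the \emph{interior} of $\operatorname{cone}(U_i)$'' deserves a line (e.g.\ subtract a small positive multiple of interior points first).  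With those two sentences added, your proof is complete and arguably tidier than the paper's.
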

\begin{proof}
In order to explain the argument we restrict to the case $n=4$. The general case follows in the same way. Each $5$-tuple in $\{\pm\}^{5}$ which contains at least one plus and at least one minus determines a non-empty intersection of hemispheres. The five $5$-tuples $++++-$, $+++-+$, $++-++$, $+-+++$ and $-++++$ represent the five spherical tetrahedral components of $U$. Each of these has four boudary simplices. Replacing a $+$ or a $-$ by the symbol $\partial$ in the $i$th coordinate to indicate intersection with the boundary $\partial_{i}$ we see that the boundary of $++++-$ is made up of the four simplices $\partial+++-$, $+\partial++-$, $++\partial+-$, $+++\partial-$. By replacing one of the plus symbols by a minus, we pass across a boudary to an adjacent region. For example $++++-$ and $++-+-$ are adjacent with common boudary $++\partial+-$. The antipodal set $-U$ is made up of the five spherical tetrahedra $----+$,\dots,$+----$ described by the tuples with four minuses and one plus. 
Each of the components of $-U$ kisses every component of $U$ save the antipodal component, that is to say, when two of these components meet, they meet in a single point.
We also introduce the symbol $\pm$ into our tuples to indicate no restriction in the relevant coordinate. For example $++\pm--$ denotes the intersection $H_{0}\cap H_{1}\cap -H_{3}\cap -H_{4}$.

The fact that $U$ is $n$-tame is immediate from the construction. To check that it is maxtame we appeal to Lemma \ref{whatisaddable}.  We need to know how to compute the convex hull of the union of any subset of the five antipodal simplices $\{----+,\dots,+----\}$. The recipe is very simple. The convex hull consists of all regions $\epsilon_{0}\dots\epsilon_{n}$ in which $\epsilon_{i}$ takes one of the values occurring amongst the chosen simplices. In effect this removes any restriction from the coordinates where there are sign changes among the different chosen simplices. For example, the convex hull of 
$$+----\cup--+--$$ is $\pm-\pm--$ and the convex hull of 
$$+----\cup--+--\cup---+-$$ is 
$\pm-\pm\pm-$. We illustrate this with the proof that
$$\hull(+----\cup--+--\cup---+-)=\pm-\pm\pm-.$$
The inclusion
$$\hull(+----\cup--+--\cup---+-)\subseteq\pm-\pm\pm-$$
is immediate because each of $+----$, $--+--$, $---+-$ is a subset of 
$\pm-\pm\pm-$ and $\pm-\pm\pm-$ is convex. Now observe that $\pm-\pm\pm-$ is an intersection of two hemispheres and in this case, on $S^{3}$, this intersection is a lens bounded by hemi-$2$-spheres. The lens sits in the circular boundary $\pm\partial\pm\pm\partial$ and each point in the lens resides on a unique great hemi-$2$-sphere with boundary this circle. Each of the three tetrahedra $+----$, $--+--$, $---+-$ shares one of its edges with a segment on the circle and in this way, the convex hull of the circle tetrahedra contains the circle and therefore all points of the lens.

It follows that the maxtame condition is satisfied: the interior of $U$ is equal $-U[4]$.
\end{proof}

\section{Proof that a maxtame subset of $S^{n-1}$ cannot have $n+2$ components}

On $S^{2}$ we have seen no examples of sets with $2$, $3$, or $5$ components. In the case of $2$ and $3$ the reason is a very easy consequence of Lemma \ref{keylemma}.

\begin{proposition}
Every maxtame subset of $S^{n-1}$ with at most $n$ components is a hemisphere.
\end{proposition}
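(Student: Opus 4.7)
The plan is to apply the Key Lemma (Lemma \ref{keylemma}) to a judiciously chosen finite subset of $U$ consisting of one point per component.

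Let $U$ be a maxtame subset of $S^{n-1}$ with $k$ components, where $1\le k\le n$. Pick one point $x_i$ from each component and set $X=\{x_1,\dots,x_k\}$. Then $X$ is finite (hence closed) with $|X|=k\le n$. Because $U$ is $n$-tame, the set $X$, having at most $n$ points, lies in some open hemisphere; this makes $X$ contained in an open hemisphere and therefore $m$-tame for every $m$, in particular $(n+1)$-tame.

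The pair $X\subseteq U$ now satisfies the hypotheses of the Key Lemma (with $U$ open and $n$-tame, $X$ closed and $(n+1)$-tame). So there exists an open hemisphere $H$ with $X\subseteq H$ and $U\cap\partial H=\emptyset$. Since $\partial H$ separates $S^{n-1}$ into $H$ and its antipode $-H$, each connected component of $U$ is contained either in $H$ or in $-H$. But each component of $U$ contains some $x_i\in X\subseteq H$, so in fact every component lies in $H$. Hence $U\subseteq H$.

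Finally, the open hemisphere $H$ is itself $n$-tame (in fact $m$-tame for all $m$). Since $U$ is maxtame, i.e.\ maximal among all open $n$-tame subsets of $S^{n-1}$, the inclusion $U\subseteq H$ forces $U=H$, so $U$ is an open hemisphere. There is no real obstacle here; the only point requiring attention is the verification that the chosen finite set $X$ qualifies as the closed $(n+1)$-tame ``seed'' the Key Lemma requires, which is exactly where the bound $k\le n$ on the number of components is used.
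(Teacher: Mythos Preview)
Your proof is correct and follows essentially the same approach as the paper: choose one representative per component, apply the Key Lemma to obtain a hemisphere whose boundary misses $U$, use connectivity to conclude $U\subseteq H$, and invoke maximality to get $U=H$. You have simply made explicit the verification that $X$ is $(n+1)$-tame, which the paper leaves implicit.
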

\begin{proof}
Let $U$ be a maxtame set with $|\pi_{0}(U)|\le n$.
Let $X$ be a finite set with exactly one point in each component. Then $X$ is contained in an open hemisphere by $n$-tameness of $U$ and therefore  the Key Lemma \ref{keylemma} shows that there is a hemisphere $H$ containing $X$ whose boundary does not meet $U$. Connectivity of each component guarantees that $U$ is contained in the interior of $H$. Since $U$ is maxtame it follows that $U$ equals the interior of $H$.
\end{proof}

That there are no $5$-component maxtame sets on $S^{2}$ also generalizes to higher dimensons.

\begin{proposition}
Let $U$ be an open $n$-tame subset of $S^{n-1}$ with exactly $n+2$ components. Then $U$ is contained in a weakly maxtame set with at most $n+1$ components.
\end{proposition}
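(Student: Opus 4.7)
The plan is to imitate the proof of the preceding proposition: pick representatives, use the Key Lemma to localize $U$ to a hemisphere, and use the balanced subsets to constrain the configuration. By Theorem B1, I may assume $U$ is itself weakly maxtame with $n+2$ components, and by Theorem A the components $C_{0},\dots,C_{n+1}$ are convex spherical polytopes with pairwise disjoint closures. It suffices to produce a single open $n$-tame enlargement $U'\supseteq U$ with at most $n+1$ connected components, since Theorem B1 then delivers the required weakly maxtame extension. Pick a representative $x_{i}\in C_{i}$ in each component.

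If $X=\{x_{0},\dots,x_{n+1}\}$ is $(n+1)$-tame, the Key Lemma \ref{keylemma} applied to $X\subseteq U$ yields an open hemisphere $H\supseteq X$ with $U\cap\partial H=\emptyset$; connectivity forces $C_{i}\subseteq H$ for every $i$, so $U\subseteq H$ and the open hemisphere $H$ is the required single-component extension.

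Otherwise $X$ contains a balanced $(n+1)$-subset; since $U$ is $n$-tame, any such subset has exactly $n+1$ points, affinely independent in $\R^{n}$, and after relabelling I may take $F:=\{x_{0},\dots,x_{n}\}$ to be such a subset. I would then locate some other $(n+1)$-subset $X'$ of $X$ that is not balanced---re-choosing a representative within a convex component if necessary to break the balance---and apply the Key Lemma to $X'$. The result is an open hemisphere $H\supseteq X'$ with $U\cap\partial H=\emptyset$ inside which $n+1$ of the components of $U$ lie, while the remaining component, which I label $C_{k}$, must lie in $-H$: for if it lay in $H$, the balanced set $F$ would be contained in the open hemisphere $H$, an impossibility. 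So $U$ splits as $V\sqcup C_{k}$ with $V\subseteq H$ consisting of $n+1$ polytopal components.

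The final step is to amalgamate two components of $V$ by a legal enlargement of $U$ within $H$. Corollary \ref{convexity} asserts that every path-component of the addable set $U^{+}=S^{n-1}\setminus -U[n-1]$ is convex. The strategy is a pigeonhole argument: to show that the addable region $H\cap U^{+}$ has at most $n$ convex path-components, forcing two components $C_{i},C_{j}\subseteq H$ of $V$ to lie in a common convex component of $U^{+}$; a geodesic arc $\gamma$ between them inside that component then keeps $U\cup\gamma$ $n$-tame (convexity precludes simultaneous addition from creating a balanced subset), and Lemma \ref{peter} applied to the path-connected set $C_{i}\cup\gamma\cup C_{j}\subseteq U\cup\gamma$ yields the required open $n$-tame extension with at most $n+1$ components. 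The principal obstacle is precisely this pigeonhole bound on path-components of $H\cap U^{+}$: the dominant contribution to the forbidden set $H\cap-U[n-1]$ is the convex polytope $-C_{k}\subseteq H$, but contributions from mixed spherical convex hulls involving both $C_{k}$ and points of $V$ must be controlled using the explicit polyhedral structure supplied by Theorem A. This combinatorial bound is the heart of the proof.
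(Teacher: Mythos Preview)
Your proposal has a genuine gap at exactly the point you flag as ``the heart of the proof'': the pigeonhole bound asserting that $H\cap U^{+}$ has at most $n$ convex path-components is never established, and it is not clear that it is even true in the stated generality. The forbidden set $-U[n-1]\cap H$ is not merely the single convex polytope $-C_{k}$; it also contains all spherical hulls of $(n-1)$-point subsets mixing points of $C_{k}$ with points of the $n+1$ components $C_{j}\subseteq H$, and these can carve $H$ into many pieces. Nothing in Theorem~A or Corollary~\ref{convexity} bounds that number by $n$. A second, smaller gap occurs earlier: the claim that some $(n+1)$-subset $X'=X\setminus\{x_{k}\}$ with $k\ne n+1$ is unbalanced is asserted with the hand-wave ``re-choosing a representative\dots to break the balance,'' but no argument is given.

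The paper's proof avoids both difficulties by a different mechanism. After fixing a balanced $(n+1)$-subset $\{x_{1},\dots,x_{n+1}\}$ spanning a simplex $\Delta\ni 0$, it uses a \emph{visibility argument}: coning the faces of $\Delta$ off to $x_{n+2}$ shows that at most two of the $n+2$ subsets $\Delta_{i}=X\setminus\{x_{i}\}$ have the origin in their convex hull. Hence at least $n$ of them, say $\Delta_{1},\dots,\Delta_{n}$, are $(n+1)$-tame, and the Key Lemma supplies open hemispheres $H_{1},\dots,H_{n}$ with $\partial H_{i}\cap U=\emptyset$ and $X\setminus\{x_{i}\}\subset H_{i}$, together with one more hemisphere $H\supseteq\{x_{1},\dots,x_{n}\}$. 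Connectivity then gives $U_{j}\subseteq H_{i}$ for all $j\ne i$ and $U_{1}\cup\dots\cup U_{n}\subseteq H$. Now one simply replaces $U_{n+1}\cup U_{n+2}$ by its spherical convex hull $W$: any $n$-point subset of $U_{1}\cup\dots\cup U_{n}\cup W$ misses at least one of these $n+1$ convex pieces, and the corresponding $H_{i}$ (or $H$, if $W$ is missed) contains the remaining pieces, so $n$-tameness is preserved. This is a direct merge of two specific components, with tameness certified by an explicit family of $n+1$ hemispheres; no analysis of the global structure of $U^{+}$ is needed.
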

\begin{proof}
Choose representative points $x_{1},\dots,x_{n+2}$, taking one from each component of $U$. Let $U_{i}$ be the component containing $x_{i}$. 
Set $X:=\{x_{1},\dots,x_{n+2}\}$.
By $n$-tameness, the simplex spanned by any $n$-point subset of $X$ is contained in an open hemisphere. In particular, the convex hulls in $\R^{n}$ of the $n$-point subsets do not contain the origin. If the convex hull of $X$ does not contain the origin then the Key Lemma \ref{keylemma} shows that $X$ is contained in an open hemisphere which does not meet $U$ and connectivity shows that $U$ itself is contained in this open hemisphere. So we may as well assume that the convex hull of $X$ does contain the origin. In that case, some $n+1$ points of $X$ have a convex hull which contain the origin and without loss of generality we assume that the simplex $\Delta$ spanned by the set $\{x_{1},\dots,x_{n+1}\}$ contains the origin within its interior. Imagine viewing $\Delta$ from the standpoint of $x_{n+2}$. We may assume that exactly $i$ of the $(n-1)$-dimensional faces of $\Delta$ are visible from $x_{n+2}$ and exactly $n+1-i$ of these faces are not visible. Coning each of the $i$ visible faces of $\Delta$ off to $x_{n+2}$ produces an $n$-simplex which does not contain the origin. Coning each of the $n-i$ non-visible faces off to $x_{n+2}$ produces an $n$-simplices which do not overlap with each other and therefore at most one of these can contain the origin.

We may conclude from this first part of the argument that the simplex in $\R^n$ spanned by $n+1$ points chosen from $X$ contains the origin in at most $2$ cases. Let $\Delta_{i}$ denote the simplex spanned by $X\setminus\{x_{i}\}$, (so $\Delta=\Delta_{n+2}$). Without loss of generality we may assume that $\Delta_{1},\dots,\Delta_{n}$ do not contain the origin and that $\Delta_{n+1}$ and $\Delta_{n+2}$ both do contain the origin. Using the Key Lemma \ref{keylemma} we conclude that there are open hemispheres $H_{1},\dots,H_{n}$ whose boundaries do not meet $U$ and such that $X\setminus \{x_{i}\}\subset H_{i}$, and that there is a further open hemisphere $H$, again with boundary not meeting $U$, so that 
$X\setminus\{x_{n+1},x_{n+2}\}\subset H$. Connectivity and convexity now imply that we may replace $U_{n+1}\cup U_{n+2}$ by its convex hull and so enlarge $U$ without violating its $n$-tameness. This procedure reduces the number of components of $U$ and the result follows from Theorem B1 together with the note following the proof of Lemma \ref{sec2:lemma3}.
\end{proof}

Thus we have the immediate

\begin{corollary}
Every maxtame subset of $S^{n-1}$ has $1$, $n+1$, or $\ge n+3$ components.
\end{corollary}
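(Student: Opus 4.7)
The plan is to deduce the corollary directly by combining the two propositions that immediately precede it, treating each possible range of component counts in turn. Let $U$ be a maxtame subset of $S^{n-1}$ and set $k := |\pi_0(U)|$.

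First I would dispose of the range $1 \le k \le n$. The preceding proposition asserts that any maxtame set with at most $n$ components is a hemisphere, and a hemisphere is connected. So in this range the only possibility is $k = 1$. This already rules out $k = 2, 3, \dots, n$.

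Next I would rule out $k = n+2$ using the second proposition. Suppose for contradiction that $U$ is a maxtame set with exactly $n+2$ components. The proposition then provides a weakly maxtame set $V$ with at most $n+1$ components such that $U \subseteq V$. In particular $V$ is an open $n$-tame subset of $S^{n-1}$ containing $U$. Since $U$ is maxtame, i.e.\ maximal among all open $n$-tame subsets of $S^{n-1}$, we must have $U = V$. But then $|\pi_0(U)| = |\pi_0(V)| \le n+1 < n+2$, contradicting the assumption. Hence $k \ne n+2$.

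Combining the two exclusions, the allowed values of $k$ are $k = 1$, $k = n+1$, or $k \ge n+3$, which is exactly the statement of the corollary. The entire argument is a one-paragraph bookkeeping step; there is no real obstacle because all the work has been done in the two propositions. The only thing worth emphasising in the written proof is the use of maximality in the second case to force $U = V$, since without this the conclusion of the proposition only provides a containment.
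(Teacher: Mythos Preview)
Your argument is correct and matches the paper's intent: the corollary is recorded there as ``immediate'' from the two preceding propositions, and your case split on $k\le n$ versus $k=n+2$ is exactly the intended bookkeeping. The one point you add explicitly---that maximality forces $U=V$ in the $k=n+2$ case---is the right observation to make the deduction airtight.
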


\bibliography{peter}
\bibliographystyle{abbrv}

\end{document}